\documentclass[a4paper,10pt,leqno]{amsart}
  \textwidth=6,3in
\usepackage[total={6in,9in},
top=1in, left=1in, right=1in, bottom=1in]{geometry}
\usepackage{amsmath}
\usepackage[italian, english]{babel}
\usepackage{amsfonts}
\usepackage{amssymb}
\usepackage{dsfont}
\usepackage{mathrsfs}
\usepackage{graphicx}
\usepackage{setspace}
\usepackage{fancyhdr}
\usepackage{amsthm}
\usepackage{empheq}
\usepackage{cases}
\usepackage[all]{xy}
\usepackage{stmaryrd}

\newcommand{\erre}{\mathds{R}}

\newcommand{\enne}{\mathds{N}}

\newcommand{\diver}{\operatorname{div}}

\newcommand{\ra}{\rightarrow}

\newcommand{\set}[1]{{\left\{#1\right\}}}               
\newcommand{\pa}[1]{{\left(#1\right)}}                  
\newcommand{\sq}[1]{{\left[#1\right]}}                  
\newcommand{\abs}[1]{{\left|#1\right|}}                 
\newcommand{\pair}[1]{\left\langle#1\right\rangle}      

\newcommand{\eps}{\varepsilon}                           


\renewcommand{\hat}[1]{\widehat{#1}}
\renewcommand{\tilde}[1]{\widetilde{#1}}




\newtheorem{theorem}{\textbf{Theorem}}[section]
\newtheorem{lemma}[theorem]{\textbf{Lemma}}
\newtheorem{proposition}[theorem]{\textbf{Proposition}}
\newtheorem{cor}[theorem]{\textbf{Corollary}}
\newtheorem{defi}[theorem]{\textbf{Definition}}
\theoremstyle{remark}
\newtheorem{rem}[theorem]{\textbf{Remark}}

\newtheorem{exe}[theorem]{\textbf{Example}}
%
%
%


\title[Nonexistence results in bounded domains]
{Nonexistence results for elliptic differential inequalities with
a potential in bounded domains}

\date{\today} \linespread{1.2}

\keywords{Nonexistence results, elliptic problems, nonnegative potential}

\subjclass[2010]{53C20; 53C25, 53A55}

\begin{document}

\maketitle

\begin{center}
\textsc{\textmd{D. D.
Monticelli\footnote{Politecnico di Milano, Italy.
Email: dario.monticelli@polimi.it.} and F.
Punzo\footnote{Universit\`{a} della Calabria, Italy. Email:
fabio.punzo@unical.it. \\ The two authors are supported by GNAMPA
projects and are members
of the Gruppo Nazionale per l'Analisi Matematica, la
Probabilit\`{a} e le loro Applicazioni (GNAMPA) of the Istituto
Nazionale di Alta Matematica (INdAM). }, }}
\end{center}
\begin{abstract}
In this paper we are concerned with a class of elliptic
differential inequalities with a potential in bounded domains both of $\erre^m$ and
of Riemannian manifolds. In particular, we investigate the effect
of the behavior of the potential at the boundary of the domain on nonexistence of nonnegative
solutions.
\end{abstract}


\section{Introduction}
In this paper we investigate nonexistence of nonnegative solutions to elliptic
differential inequalities of the form
\begin{equation}\label{EQ_gen}
 \frac{1}{a(x)}\diver\pa{a(x)\abs{\nabla u}^{p-2}\nabla u}+V(x)u^\sigma
 \leq0 \quad \textrm{in}\;\; \Omega,
 \end{equation}
where $\Omega$ is an open relatively compact connected subset of a general $m-$dimensional Riemannian manifold $M$ endowed with a metric tensor $g$, $\nabla$, $\diver$ and $\Delta$ denote the gradient, the divergence operator and the Laplace-Beltrami operator associated to the metric, respectively.
Furthermore, here and
in the rest of the paper we assume that $a : \Omega \ra \erre$
satisfies
\begin{equation}\label{EQ_propr_a}
  a>0, \quad a \in \operatorname{Lip}_{\text{loc}}(\Omega),
\end{equation}
$V>0$ a.e. on $\Omega,\, V\in L^1_{\text{loc}}(\Omega)$, and the
constants $p$ and $\sigma$ satisfy $p>1$, $\sigma>p-1$. Tipically $V$ is {\it unbounded} at $\partial \Omega$. We explicitly note
that some of the results we find are new also for the model equation
\begin{equation}\label{EQ_lapl}
 \Delta u+V(x)u^\sigma \leq 0\quad \textrm{in}\;\; \Omega,
\end{equation}
in the special case $\Omega\subset \mathbb R^m$.

\smallskip

If, differently from what will be the focus of the present paper, we consider the case when $\Omega=\mathbb R^m$ or $\Omega=M$, where $M$ is a complete noncompact Riemannian manifold, then there exists an extensive literature concerning nonexistence of nonnegatve solutions of equation \eqref{EQ_gen}.
We refer to \cite{DaMit},
\cite{MitPohoz359}, \cite{MitPohoz227}, \cite{MitPohozAbsence} and
\cite{MitPohozMilan} for a comprehensive description of results
related to these (and also more general) problems on $\erre^m$.
Note that analogous results have also been obtained for degenerate
elliptic equations and inequalities (see, e.g., \cite{DaLu},
\cite{Mont}), and for the parabolic companion problems (see, e.g.,
\cite{MitPohozMilan}, \cite{PoTe}, \cite{P1}, \cite{PuTe}).  The results in the case of a complete Riemannian manifold have a
more recent history, in particular we cite the inspiring papers \cite{GrigKond} and \cite{GrigS}, and the papers \cite{MMP1}, \cite{MMP2}, \cite{Sun1}, \cite{Sun2}.
In particular it is showed that equation
\eqref{EQ_gen} admits the unique nonnegative solution $u\equiv 0$, assuming certain
key assumptions hold, which are concerned with the parameters $p$, $\sigma$ and with the behavior of a suitably weighted volume of geodesic balls, with density given by the product of $a$ and of a negative power of the potential $V$.

\smallskip

In this work we intend to focus our attention on the case where $\Omega\subset M$ is an open relatively compact domain, considering {\it local weak solutions}, meant in the sense of Definition \ref{defsol} below.

We start with a definition describing the weighted volume growth conditions on special subsets of $\Omega$, contained in a neighborhood of $\partial \Omega$, that will be used in obtaining our nonexistence
results for nonnegative solutions of \eqref{EQ_gen}. We denote the canonical Riemannian measure on
$M$ with $d\mu_0$, while we define
\begin{equation}\label{meas_a}
d\mu = a\, d\mu_0
\end{equation}
the weighted measure on $M$ with density $a$.

Let $d(x):=\operatorname{dist}(x,\partial\Omega)$ for any $x\in \bar \Omega$\,.
For every $\delta>0$ we define
\begin{equation}\label{ed1}
\mathcal S^\delta :=\{ x\in \Omega\,:\,d(x) < \delta \},\qquad\qquad \Omega^\delta:=\Omega\setminus\overline{\mathcal S^\delta}.
\end{equation}

Recall that $p>1$, $\sigma>p-1$, $V>0$ a.e. in $\Omega$ and $V\in
  L^1_{\text{loc}}(\Omega)$ and define
  \begin{equation}\label{31}
  \alpha=\frac{p\sigma}{\sigma-p+1},  \qquad \beta=\frac{p-1}{\sigma-p+1}.
  \end{equation}
 Note that $\alpha>1$. We introduce the following three weighted volume growth conditions:
\begin{itemize}
    \item[i)] We say that \emph{condition (HP1)} holds if there exist $C>0$, $C_0>0$, $k\in [0, \beta)$, $\delta_0\in (0,1)$, $\eps_0\in \left(0, \frac{\alpha-1}{C_0} \wedge \beta\right)$ such that, for every $\delta\in (0, \delta_0)$  and for every $\eps\in (0, \eps_0),$
    \begin{equation}\label{EQ_hp1}
      \int_{\mathcal S^{\delta}\setminus \mathcal S^{\delta/2}} V^{-\beta+\eps}\,d\mu \leq C \delta^{\alpha-C_0\eps}|\log \delta|^k.
    \end{equation}
    \item[ii)] We say that \emph{condition (HP2)} holds if  there exist $C>0$, $C_0>0$, $\delta_0\in (0,1)$, $\eps_0\in \left(0, \frac{\alpha-1}{C_0}\wedge \beta \right)$ such that, for every $\delta\in (0, \delta_0)$  and for every $\eps\in (0, \eps_0)$,
    \begin{equation}\label{EQ_hp2}
      \int_{\mathcal S^{\delta}\setminus \mathcal S^{\delta/2}} V^{-\beta+\eps}\,d\mu \leq C \delta^{\alpha - C_0\eps}|\log \delta|^\beta \qquad \text{ and } \qquad \int_{\mathcal S^{\delta}\setminus \mathcal S^{\delta/2}} V^{-\beta-\eps}\,d\mu \leq C \delta^{\alpha - C_0\eps}|\log \delta|^\beta.
    \end{equation}
       \item[iii)] We say that \emph{condition (HP3)} holds if there exist $C>0$, $C_0\geq0$, $k\geq 0$, $\theta>0$, $\tau>\max\{\frac{\sigma-p+1}{\sigma}\pa{k+1},1\}$, $\delta_0\in (0, 1)$, $\eps_0\in \left(0, \frac{\alpha-1}{C_0}\wedge \beta\right)$ such that, for every  $\delta\in (0, \delta_0)$  and for every $\eps\in (0, \eps_0)$,
    \begin{equation}\label{EQ_hp3}
      \int_{\mathcal S^{\delta}\setminus \mathcal S^{\delta/2}} V^{-\beta+\eps}\,d\mu \leq C \delta^{\alpha-C_0\eps}|\log \delta|^k e^{-\eps \theta |\log
      \delta|^\tau}.
    \end{equation}
  \end{itemize}

\begin{rem}
Observe that, in general, conditions $(HP1),\, (HP2)$ and $(HP3)$
are mutually independent. In particular, we note that, in general,
$(HP3)$ does not imply $(HP1)$;
this is essentially due to the fact that constant $C$ in $(HP3)$
must be independent of $\delta$ and $\varepsilon$, see Example \ref{exindip}. The remaining cases
can be easily treated; we leave the details to the interested
reader.
\end{rem}

Let us discuss some sufficient conditions for \eqref{EQ_hp1}, \eqref{EQ_hp2}, \eqref{EQ_hp3}\,.
\begin{rem}
\begin{itemize}
    \item[i)] Suppose that there exist $C>0, C_0>0$, $k\in (0, \beta), \delta_0\in (0,1)$ such that
    \begin{equation}\label{e30}
    V(x) \leq C \pa{d(x)}^{-C_0}\qquad\text{ for all } x\in \mathcal S^{\delta_0}
    \end{equation}
    and
    \begin{equation}\label{e31}
    \int_{\mathcal S^{\delta}\setminus \mathcal S^{\delta/2}} V^{-\beta}\,d\mu \leq C \delta^{\alpha}|\log \delta|^k
    \end{equation}
    for every $\delta\in (0, \delta_0)$, then condition \eqref{EQ_hp1} holds.
    \item[ii)] If
    \begin{equation}\label{e31es}
    a(x) \leq  C \quad \textrm{and}\quad V(x) \geq C \pa{d(x)}^{-(\sigma+1)}|\log d(x)|^{-\frac k{\beta}} \quad\text{ for all } x\in\mathcal S^{\delta_0}\,,
    \end{equation}
    then \eqref{EQ_hp1} holds, with $C_0\geq \sigma +1$.

    \item[iii)] Suppose that there exist $C>0$, $C_0>0$, $\delta_0\in (0,1)$ such that
    \begin{equation}\label{e32}
    \frac{1}{C}\pa{d(x)}^{C_0} \leq V(x) \leq C \pa{d(x)}^{-C_0}\qquad\text{ for all } x\in\mathcal S^{\delta_0}
    \end{equation}
   and
  \begin{equation}\label{e33}
  \int_{\mathcal S^{\delta}\setminus \mathcal S^{\delta/2}} V^{-\beta}\,d\mu \leq C \delta^{\alpha}|\log \delta|^\beta
   \end{equation}
  for every $\delta\in (0, \delta_0)$, then condition \eqref{EQ_hp2} holds. Moreover, condition \eqref{e33} is satisfied, provided that
    \begin{equation}\label{e31esb}
    a(x) \leq  C \quad \textrm{and}\;\; V(x) \geq C \pa{d(x)}^{-(\sigma+1)}|\log d(x)|^{-1}\qquad\text{ for all } x\in \mathcal S^{\delta_0} \,.
    \end{equation}
    Hence, if \eqref{e31esb} and the second inequality of \eqref{e32} are satisfied, then condition \eqref{EQ_hp2} holds,

    \item[iv)] Suppose there exist $C>0$, $C_0\geq0$, $k\geq 0$, $\theta>0$, $\tau>\max\{\frac{\sigma-p+1}{\sigma}\pa{k+1},1\}$, $\delta_0\in (0,1)$  such that
    \begin{equation}\label{e34}
    V(x) \leq C \pa{d(x)}^{- C_0}e^{-\theta |\log d(x)|^\tau}\qquad\text{ for all } x\in \mathcal S^{\delta_0}
    \end{equation}
    and
    \begin{equation}\label{e35}
     \int_{\mathcal S^\delta\setminus \mathcal S^{\delta/2}} V^{-\beta}\,d\mu \leq C \delta^{\alpha}|\log \delta|^k
    \end{equation}
    for every $\delta\in (0, \delta_0)$, then condition \eqref{EQ_hp3} holds.

    \item[v)] If, for some $\epsilon_1\in(0,\epsilon_0\wedge\beta),$
    \[ a(x) \leq  C \quad \textrm{and}\quad  V(x) \geq C d(x)^{-(\sigma +1)} |\log(d(x))|^{-\frac k {\beta}} e^{\left(-1+\frac{\beta}{\beta-\epsilon_1}\right)\theta|\log(d(x)|^{\tau}}\quad\text{ for all } x\in \mathcal S^{\delta_0}\,,\]
    then condition \eqref{EQ_hp3} holds.
  \end{itemize}
\end{rem}

We can now state our main theorem.

\begin{theorem}\label{thm_intro3}
Let $p>1$, $\sigma>p-1$, $V\in L^1_{\text{loc}}(\Omega)$ with  $V>0$
a.e. in $\Omega$ and  $a \in \operatorname{Lip}_{\text{loc}}(\Omega)$ with
$a>0$ on $\Omega$. Assume that one of the
conditions (HP1), (HP2) or (HP3) holds. If $u \in W^{1,p}_{\rm{loc}}(\Omega) \cap
L^\sigma_{\rm{loc}}(\Omega, V d\mu)$ is a nonnegative weak solution
of \eqref{EQ_gen}, then $u\equiv0$ in $\Omega$.
\end{theorem}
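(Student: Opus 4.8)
The plan is to carry out a test-function argument of Mitidieri--Pohozaev / Grigor'yan--Sun type, with a test function built from a negative power of a regularization of $u$ times a cut-off concentrated near $\partial\Omega$, and then to feed the resulting integral inequality into the weighted volume bounds (HP1)--(HP3) by one application of H\"older's inequality. Write $d\mu=a\,d\mu_0$, so that $u$ being a local weak solution of \eqref{EQ_gen} reads $\int_\Omega\abs{\nabla u}^{p-2}\pair{\nabla u,\nabla\phi}\,d\mu\ge\int_\Omega Vu^\sigma\phi\,d\mu$ for every admissible $\phi\ge0$. Fix $\eps\in(0,\eps_0)$, set $\gamma:=\eps(\sigma-p+1)$ (so that $0<\gamma<p-1<\sigma$, since $\eps<\beta$) and fix an exponent $\lambda>\alpha$; for $\eta>0$ put $u_\eta:=u+\eta$. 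Testing \eqref{EQ_gen} against $\phi:=u_\eta^{-\gamma}\psi^\lambda$, where $\psi\ge0$ is Lipschitz with compact support in $\Omega$ (admissible since $u\in W^{1,p}_{\mathrm{loc}}\cap L^\sigma_{\mathrm{loc}}(\Omega,Vd\mu)$ and $u_\eta\ge\eta>0$), expanding $\nabla\phi$, discarding the good term $-\gamma\int\abs{\nabla u}^pu_\eta^{-\gamma-1}\psi^\lambda\,d\mu\le0$, applying Young's inequality to the remaining gradient term, absorbing, and finally letting $\eta\downarrow0$ (monotone convergence on the left, since $u^\sigma u_\eta^{-\gamma}\uparrow u^{\sigma-\gamma}$ a.e.; dominated convergence on the right, since $u^{p-1-\gamma}$ is locally integrable), one obtains
\[
\int_\Omega Vu^{\sigma-\gamma}\psi^\lambda\,d\mu\ \le\ C\int_\Omega u^{p-1-\gamma}\abs{\nabla\psi}^p\psi^{\lambda-p}\,d\mu .
\]

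Next I would apply H\"older's inequality to the right-hand side with the conjugate exponents $q=\tfrac{\sigma-\gamma}{p-1-\gamma}>1$ and $q'=\tfrac{\sigma-\gamma}{\sigma-p+1}$, splitting the integrand as $\bigl(Vu^{\sigma-\gamma}\psi^\lambda\bigr)^{1/q}\bigl(V^{-q'/q}\abs{\nabla\psi}^{pq'}\psi^{\lambda-pq'}\bigr)^{1/q'}$. The choice $\gamma=\eps(\sigma-p+1)$ is exactly what makes the power of $u$ in the second factor vanish and gives $q'/q=\beta-\eps$ and $pq'=\alpha-p\eps$, the last identity being the sharp relation $\alpha=p(\beta+1)$. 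Since $q>1$, the factor $\bigl(\int_\Omega Vu^{\sigma-\gamma}\psi^\lambda\,d\mu\bigr)^{1/q}$ — finite because $u^{\sigma-\gamma}\le1+u^\sigma$ and $u\in L^\sigma_{\mathrm{loc}}(\Omega,Vd\mu)$ — can be absorbed into the left-hand side by Young's inequality, leaving
\[
\int_\Omega Vu^{\sigma-\gamma}\psi^\lambda\,d\mu\ \le\ C\int_\Omega V^{-\beta+\eps}\abs{\nabla\psi}^{\alpha-p\eps}\psi^{\lambda-\alpha+p\eps}\,d\mu .
\]

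Now, for $0<\delta<\delta_0$, I would take $\psi=\psi_\delta:=F_\delta(d(\cdot))$ with $F_\delta\equiv0$ on $[0,\delta]$, $F_\delta\equiv1$ on $[\delta_0,\infty)$ and $F_\delta(t)=\log(t/\delta)/\log(\delta_0/\delta)$ in between (a logarithmic cut-off, corners smoothed). Then $\psi_\delta\equiv1$ on $\Omega^{\delta_0}$, $\psi_\delta$ has compact support in $\Omega$, and $\abs{\nabla\psi_\delta}\le C/(d\abs{\log\delta})$ on $\mathcal S^{\delta_0}\setminus\mathcal S^\delta$ and vanishes elsewhere. Decomposing $\mathcal S^{\delta_0}\setminus\mathcal S^\delta$ into the $\sim\abs{\log\delta}$ dyadic shells $\mathcal S^{r_j}\setminus\mathcal S^{r_j/2}$, $r_j=2^{-j}\delta_0$, using $\abs{\nabla\psi_\delta}\lesssim1/(r_j\abs{\log\delta})$ on the $j$-th shell together with (HP1) (respectively (HP2), (HP3)) to bound $\int_{\mathcal S^{r_j}\setminus\mathcal S^{r_j/2}}V^{-\beta+\eps}\,d\mu$, and summing over $j$, the right-hand side of the last display is
\[
\le\ C\,\abs{\log\delta}^{\,k-\alpha+1+p\eps}\,\delta^{-\max\{C_0-p,\,0\}\,\eps}\qquad\bigl(\text{times }e^{-\eps\theta\abs{\log\delta}^\tau}\ \text{under (HP3)}\bigr),
\]
the decisive gain $\abs{\log\delta}^{-(\alpha-1)}$ arising from spreading $\nabla\psi_\delta$ over $\sim\abs{\log\delta}$ shells. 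I then let $\delta\downarrow0$ with $\eps=\eps(\delta)\to0$ chosen so that $\eps(\delta)\abs{\log\delta}\to0$: since $k<\beta<\alpha-1$ (which holds because $\sigma>0$) this forces the bound to $0$ under (HP1); the same works under (HP2) at the borderline value $k=\beta$ once one also uses the $V^{-\beta-\eps}$ bound, and under (HP3) the super-polynomial factor $e^{-\eps\theta\abs{\log\delta}^\tau}$, together with the lower bound imposed on $\tau$, lets one conclude (even with $\eps$ kept fixed and small, or with $\eps(\delta)\to0$ slowly enough). Because $\psi_\delta\equiv1$ on the fixed set $\Omega^{\delta_0}$ and $\gamma=\gamma(\delta)\to0$, the last display yields $\int_{\Omega^{\delta_0}}Vu^{\sigma-\gamma(\delta)}\,d\mu\to0$, whence $\int_{\Omega^{\delta_0}}Vu^{\sigma}\,d\mu=0$ by Fatou's lemma; as $a,V>0$ a.e.\ this forces $u=0$ a.e.\ on $\Omega^{\delta_0}$, and letting $\delta_0\downarrow0$ gives $u\equiv0$ in $\Omega$.

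The analytic heart, and the step I expect to be the main obstacle, is this last one: the test-function computation by itself produces only \emph{some} power of $\delta$, and it is the precise weighted-volume growth in (HP1)--(HP3), the sharp exponent $\alpha=p(\beta+1)$ and the logarithmic cut-off together that force the right-hand side to vanish. Controlling the competition between the harmless factor $\abs{\log\delta}^{-(\alpha-1)}$ and the dangerous factor $\delta^{-C_0\eps}$ while sending $\eps\to0$ at the correct rate — and staying within the admissible ranges of $k,\eps_0,\theta,\tau$ — is exactly what separates the three hypotheses (HP1), (HP2), (HP3) and where the real work lies.
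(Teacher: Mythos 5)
Your overall strategy — a direct Mitidieri--Pohozaev test-function computation with $\phi=u_\eta^{-\gamma}\psi^\lambda$, one H\"older step, a logarithmic cut-off supported on $\mathcal S^{\delta_0}\setminus\mathcal S^\delta$, and a dyadic decomposition to feed in (HP1)--(HP3) — is a genuine alternative to the paper's route, which instead imports two ready-made integral inequalities (Lemmas~\ref{LE_tech1} and~\ref{LE_tech2}, from \cite{MMP1}) and uses a power cut-off $\varphi(x)=(d(x)/\delta)^{C_1 t}$ coupled with an inner cut-off $\eta_n$. Both routes rest on the same sharp identity $\alpha=p(\beta+1)$ and the same dyadic bookkeeping. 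However, your write-up has two concrete gaps.

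First, the constant ``$C$'' in your key inequality is \emph{not} uniform in $\gamma$, and this matters at the end. You cannot both ``discard the good term $-\gamma\int|\nabla u|^p u_\eta^{-\gamma-1}\psi^\lambda\,d\mu$'' and ``absorb'' after Young: the only place to absorb the cross term is precisely into that good term, and choosing the Young parameter so small that it can be absorbed there produces a factor of order $\gamma^{-(p-1)}$. After the second absorption (the H\"older/Young step raising the right-hand side to the power $q'=(\sigma-\gamma)/(\sigma-p+1)$) this becomes $\gamma^{-(p-1)\sigma/(\sigma-p+1)}$, i.e.\ $\varepsilon^{-(p-1)\sigma/(\sigma-p+1)}$ — exactly the blow-up $t^{-(p-1)\sigma/(\sigma-p+1)}$ appearing in Lemma~\ref{LE_tech1}. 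Since your argument sends $\varepsilon=\varepsilon(\delta)\to0$, this factor diverges and must be beaten by the gain $|\log\delta|^{\,k+1-\alpha}$. Observing that $\alpha-1-\beta=\frac{(p-1)\sigma}{\sigma-p+1}$, one sees the competition works if and only if $\varepsilon(\delta)$ is calibrated so that $\varepsilon\simeq|\log\delta|^{-\nu}$ with $1\le\nu<(\alpha-1-k)\frac{\sigma-p+1}{(p-1)\sigma}$ (nonempty range because $k<\beta$). Your stated criterion ``$\varepsilon|\log\delta|\to0$'' is necessary for controlling $\delta^{-(C_0-p)_+\varepsilon}$, but by itself does not prevent the $\varepsilon^{-(p-1)\sigma/(\sigma-p+1)}$ factor from overwhelming the logarithmic gain; you must also bound $\nu$ from above.

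Second, and more seriously, your scheme as written does not reach (HP2). With $k=\beta$ the exponent $k+1-\alpha=-\frac{(p-1)\sigma}{\sigma-p+1}$ exactly cancels the blow-up of $\varepsilon^{-(p-1)\sigma/(\sigma-p+1)}$ at the optimal rate $\varepsilon\simeq|\log\delta|^{-1}$, so the right-hand side stays bounded but does \emph{not} tend to zero. This only yields $u\in L^\sigma(\Omega,Vd\mu)$. The paper then needs Lemma~\ref{LE_tech2} (not just its corollary), which retains the compactness factor $\bigl(\int_{\mathcal S^\delta}\varphi^s u^\sigma V\,d\mu\bigr)^{(t+1)(p-1)/(p\sigma)}$ — tending to $0$ once $u\in L^\sigma(Vd\mu)$ — and it is precisely at this point that the second half of hypothesis (HP2), the bound on $\int V^{-\beta-\varepsilon}\,d\mu$, is used, because Lemma~\ref{LE_tech2} brings in the exponent $-\beta-\Lambda$ with $\Lambda>0$. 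Your single H\"older split can only produce $V^{-\beta+\varepsilon}$ on the right-hand side, so there is nowhere for the $V^{-\beta-\varepsilon}$ control to enter, and the clause ``once one also uses the $V^{-\beta-\varepsilon}$ bound'' is unsupported: you would need to redesign the test-function step (e.g.\ splitting $u^{p-1-\gamma}$ three ways so that one factor is $\int_{\operatorname{supp}\nabla\psi}u^\sigma V\psi^\lambda\,d\mu$ and a second carries $V^{-\beta-\varepsilon}$), which is exactly what Lemma~\ref{LE_tech2} accomplishes.
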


We remark that, for the case $p=2$, the weighted
volume growth conditions that we assume on geodesic balls are in
many cases sharp. In particular, in this direction we construct a counterexample in geodesic balls of Riemannian models (see Section \ref{ccex}). In order to construct such counterexample, we will provide some conditions implying that the infimum of the spectrum of the Laplace operator on
the space $L^2_V(\Omega):=\{f: \Omega \to \mathbb R\,\, \textrm{measurable such that}\,\, \int_\Omega f^2\, V d\mu \, <\, \infty\}$ is $0$, a result that can be of independent interest (see Remark \ref{rem2}). Such a spectrum is clearly related to the eigenvalue equation
\begin{equation}\label{eigeneq}
\Delta \phi + \lambda V(x) \phi =0 \quad \textrm{in}\;\; \Omega\,.
 \end{equation}

\smallskip

The rest of the paper is organized as follows. In Section
\ref{sec2} we establish some preliminary technical results,
that we put to use in Section \ref{sec3}, where we give the proof
of Theorem \ref{thm_intro3}. Finally in Section
\ref{cex} we provide a family of counterexamples.
\vspace{0,3cm}

%
%
%
%
%
%
%
%
%
%
%
%
%

\section{Preliminary results}\label{sec2}

For any relatively compact domain $D\subset \Omega$ and for any $p>1$,
$W^{1, p}(D)$ is  the completion of the space of Lipschitz
functions $w: D \ra \erre$ with respect to the norm
\[
\|w\|_{W^{1,p}(D)}= \pa{\int_D \abs{\nabla
w}^p\,d\mu_0+\int_D \abs{w}^p\,d\mu_0}^{\frac 1p}.
\]

For any function $u:\Omega\ra\erre$ we say that $u \in
W^{1,p}_{\text{loc}}(\Omega)$ if for every relatively compact domain
$D\subset\subset \Omega$ one has $ u_{|_D} \in
W^{1,p}(D)$.

\begin{defi}\label{defsol}
Let $p>1$, $\sigma>p-1$, $V>0$ a.e. in $\Omega$ and $V\in
L^1_{\text{loc}}(\Omega)$. We say that $u$ is a \emph{weak solution } of
equation \eqref{EQ_gen} if $u \in W^{1,p}_{\text{loc}}(\Omega) \cap
L^\sigma_{\text{loc}}(\Omega, V d\mu_0)$ and for every $\varphi \in
W^{1,p}(\Omega) \cap L^\infty(\Omega)$, with $\varphi \geq 0$ a.e. in $\Omega$ and
compact support, one has
 \begin{equation}\label{19}
    -\int_{\Omega} a(x)\abs{\nabla u}^{p-2}\pair{\nabla u, \nabla\pa{\frac{\varphi}{a(x)}}}\,d\mu_0 + \int_{\Omega} V(x)u^\sigma \varphi\,d\mu_0 \leq 0\,.
  \end{equation}
\end{defi}

\begin{rem}\label{rem1}
  We note that, by \eqref{EQ_propr_a}, $u \in
W^{1,p}_{\text{loc}}(\Omega) \cap L^\sigma_{\text{loc}}(\Omega, V d\mu)$ is a
weak solution of \eqref{EQ_gen} if and only if it  is a weak
solution of
\[
  \diver\pa{a(x)\abs{\nabla u}^{p-2}\nabla u} + a(x) V(x)u^\sigma \leq 0 \quad \text{in }\,
  \Omega,
\]
i.e. if and only if for every $\psi \in W^{1,p}(\Omega) \cap
L^\infty(\Omega)$, with $\psi \geq 0$ a.e. in $\Omega$ and compact support,
one has
\begin{equation}\label{EQ_weakSolGen}
    -\int_{\Omega} \abs{\nabla u}^{p-2}\pair{\nabla u, \nabla \psi}\,d\mu + \int_{\Omega} V(x)u^\sigma \psi\,d\mu \leq 0,
  \end{equation}
where $d\mu$ is the measure on $M$ with density $a$, as defined in
\eqref{meas_a}.

Indeed, given any nonnegative $\psi \in W^{1,p}(\Omega) \cap L^\infty(\Omega)$
with compact support, one can choose $\varphi=a\psi$ as a test
function in \eqref{19} in order to obtain \eqref{EQ_weakSolGen}.
Similarly,  given any nonnegative $\varphi \in W^{1,p}(\Omega) \cap
L^\infty(\Omega)$ with compact support, one can insert
$\psi=\frac{\varphi}{a}$ in \eqref{EQ_weakSolGen} and find
\eqref{19}.
\end{rem}

The following two lemmas will be crucial ingredients in the proof
the Theorem \ref{thm_intro3} (for their proofs see \cite[Lemma 2.3, 2.4]{MMP1}).

\begin{lemma}\label{LE_tech1}
Let $s \geq \frac{p\sigma}{\sigma-p+1}$ be fixed. Then there exists
a constant $C>0$ such that for every $t\in\pa{0, \min\set{1, p-1}}$,
every nonnegative weak solution $u$ of equation \eqref{EQ_gen} and
every function $\varphi \in \operatorname{Lip}(\Omega)$ with compact
support and $0 \leq \varphi \leq 1$  one has
  \begin{equation}\label{EQ_2.6Lemma_tech1}
    \frac{t}{p}\int_{\Omega}{\varphi^su^{-t-1}\abs{\nabla u}^p\chi_D}\,d\mu + \frac{1}{p}\int_{\Omega}{V u^{\sigma-t}\varphi^s}\,d\mu \leq C t^{-\frac{(p-1)\sigma}{\sigma-p+1}}\int_{\Omega}{V^{-\frac{p-t-1}{\sigma-p+1}}\abs{\nabla\varphi}^{\frac{p(\sigma-t)}{\sigma-p+1}}}\, d\mu,
  \end{equation}
  where $D = \set{x\in \Omega : u(x)>0}$, $\chi_D$ is the characteristic function of
  $D$ and $d\mu$ is the measure on $M$ with density $a$, as defined in
\eqref{meas_a}.
\end{lemma}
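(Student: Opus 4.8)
The plan is to insert a carefully chosen test function into the weak formulation \eqref{EQ_weakSolGen} of Remark~\ref{rem1} and then to invoke Young's inequality twice. Fix $\eta>0$, set $u_\eta:=u+\eta$, and let $\varphi\in\operatorname{Lip}(\Omega)$ have compact support with $0\le\varphi\le1$. I would use
\[
\psi:=\varphi^{s}\,u_\eta^{-t}
\]
as test function. Since $s\ge\frac{p\sigma}{\sigma-p+1}>p>1$, the function $\varphi^{s}$ is Lipschitz with compact support, while $u_\eta\ge\eta>0$ gives $u_\eta^{-t}\in(0,\eta^{-t}]$ and $\nabla(u_\eta^{-t})=-t\,u_\eta^{-t-1}\nabla u\in L^{p}_{\mathrm{loc}}(\Omega)$ (using $\abs{\nabla u}\in L^{p}_{\mathrm{loc}}(\Omega)$); hence $\psi\in W^{1,p}(\Omega)\cap L^{\infty}(\Omega)$ is nonnegative with compact support, i.e. admissible. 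Writing $\nabla\psi=s\varphi^{s-1}u_\eta^{-t}\nabla\varphi-t\,\varphi^{s}u_\eta^{-t-1}\nabla u$, bounding $-\abs{\nabla u}^{p-2}\pair{\nabla u,\nabla\varphi}\le\abs{\nabla u}^{p-1}\abs{\nabla\varphi}$ by Cauchy--Schwarz, and using nonnegativity of all remaining factors, \eqref{EQ_weakSolGen} gives
\[
 t\int_{\Omega}\varphi^{s}u_\eta^{-t-1}\abs{\nabla u}^{p}\,d\mu+\int_{\Omega}Vu^{\sigma}u_\eta^{-t}\varphi^{s}\,d\mu\le s\int_{\Omega}\varphi^{s-1}u_\eta^{-t}\abs{\nabla u}^{p-1}\abs{\nabla\varphi}\,d\mu.
\]
All integrals here are finite for $\eta>0$, since $u_\eta^{-t}\le\eta^{-t}$, $u\in L^{\sigma}_{\mathrm{loc}}(\Omega,Vd\mu)$ and $\abs{\nabla u}\in L^p_{\mathrm{loc}}(\Omega)$.

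The first application of Young's inequality (exponents $\frac{p}{p-1}$, $p$) peels off $\bigl(\varphi^{s}u_\eta^{-t-1}\abs{\nabla u}^{p}\bigr)^{(p-1)/p}$ from the right-hand integrand, so that for every $\eps_1>0$
\[
 s\,\varphi^{s-1}u_\eta^{-t}\abs{\nabla u}^{p-1}\abs{\nabla\varphi}\le s\eps_1\,\varphi^{s}u_\eta^{-t-1}\abs{\nabla u}^{p}+C\,s\,\eps_1^{-(p-1)}\,\varphi^{s-p}u_\eta^{p-t-1}\abs{\nabla\varphi}^{p}.
\]
Taking $\eps_1$ proportional to $t/s$ absorbs part of the first term on the left and leaves on the right a quantity $\le C\,s^{p}t^{-(p-1)}\int_{\Omega}\varphi^{s-p}u_\eta^{p-t-1}\abs{\nabla\varphi}^{p}\,d\mu$. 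To the last integrand I would apply Young again, with the conjugate exponents $q=\frac{\sigma-t}{p-t-1}$ and $q'=\frac{\sigma-t}{\sigma-p+1}$ (here $q>1$ precisely because $\sigma>p-1$, and $q>0$ because $t<p-1$), splitting
\[
 \varphi^{s-p}u_\eta^{p-t-1}\abs{\nabla\varphi}^{p}=\bigl(\varphi^{s}V\,u_\eta^{\sigma-t}\bigr)^{1/q}\bigl(\varphi^{\,s-p-s/q}V^{-1/q}u_\eta^{\,p-t-1-(\sigma-t)/q}\abs{\nabla\varphi}^{p}\bigr).
\]
The exponent $q$ is chosen exactly so that the power of $u_\eta$ in the second factor vanishes, and $pq'=\frac{p(\sigma-t)}{\sigma-p+1}\le\frac{p\sigma}{\sigma-p+1}\le s$ guarantees that the leftover power $s-pq'$ of $\varphi$ is nonnegative, hence (since $0\le\varphi\le1$) that factor is $\le1$. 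Thus, for every $\eps_2>0$,
\[
 \varphi^{s-p}u_\eta^{p-t-1}\abs{\nabla\varphi}^{p}\le\eps_2\,\varphi^{s}V\,u_\eta^{\sigma-t}+C\,\eps_2^{-\frac{p-t-1}{\sigma-p+1}}\,V^{-\frac{p-t-1}{\sigma-p+1}}\abs{\nabla\varphi}^{\frac{p(\sigma-t)}{\sigma-p+1}}.
\]

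Plugging this back, using $u_\eta^{\sigma-t}=u_\eta^{\sigma}u_\eta^{-t}\le C\bigl(u^{\sigma}u_\eta^{-t}+\eta^{\sigma-t}\bigr)$, and choosing $\eps_2$ proportional to $t^{p-1}s^{-p}$ so that the term $\int_\Omega\varphi^{s}Vu^{\sigma}u_\eta^{-t}\,d\mu$ generated on the right is absorbed into the left-hand side (leaving coefficients $\ge\frac1p$ in front of both left-hand terms), one arrives at
\[
 \frac{t}{p}\int_{\Omega}\varphi^{s}u_\eta^{-t-1}\abs{\nabla u}^{p}\,d\mu+\frac1p\int_{\Omega}Vu^{\sigma}u_\eta^{-t}\varphi^{s}\,d\mu\le C\,t^{-(p-1)\frac{\sigma-t}{\sigma-p+1}}\int_{\Omega}V^{-\frac{p-t-1}{\sigma-p+1}}\abs{\nabla\varphi}^{\frac{p(\sigma-t)}{\sigma-p+1}}\,d\mu+R_\eta,
\]
with $C=C(s,p,\sigma)$ and $R_\eta\to0$ as $\eta\to0^{+}$ because $V\,d\mu$ is finite on compact subsets of $\Omega$; moreover $t<1$ yields $(p-1)\frac{\sigma-t}{\sigma-p+1}\le\frac{(p-1)\sigma}{\sigma-p+1}$, so the prefactor of $t$ is dominated by $t^{-\frac{(p-1)\sigma}{\sigma-p+1}}$. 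Finally, letting $\eta\to0^{+}$: on $\{u=0\}$ one has $\nabla u=0$ a.e. and $\sigma-t>0$, so the left-hand integrands vanish there, while on $D=\{u>0\}$ the monotone convergences $u_\eta^{-t-1}\uparrow u^{-t-1}$ and $u^{\sigma}u_\eta^{-t}\uparrow u^{\sigma-t}$ together with the monotone convergence theorem yield \eqref{EQ_2.6Lemma_tech1}; the factor $\chi_D$ appears precisely through this splitting of $\Omega$. I expect the main obstacle to be purely technical: justifying admissibility of $\psi$ when $u$ is a priori neither bounded nor bounded below (which forces the $+\eta$ regularization and the use of the compact support of $\varphi$) and carrying out the exponent bookkeeping in the two Young inequalities so as to extract the sharp power of $t$; there is no deeper difficulty, this being a Caccioppoli-type estimate.
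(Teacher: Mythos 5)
Your proposal is correct and follows essentially the same route as the proof cited in the paper (Lemma~2.3 of [MMP1]): test the weak formulation \eqref{EQ_weakSolGen} with $\psi=\varphi^{s}(u+\eta)^{-t}$, apply Young's inequality twice with the conjugate pairs $\bigl(\tfrac{p}{p-1},p\bigr)$ and $\bigl(\tfrac{\sigma-t}{p-t-1},\tfrac{\sigma-t}{\sigma-p+1}\bigr)$, tune the two small parameters to absorb terms, use $0\le\varphi\le1$ together with $s\ge\frac{p\sigma}{\sigma-p+1}>pq'$ to drop the leftover power of $\varphi$, and finally let $\eta\to0^{+}$ via monotone convergence while noting that $\nabla u=0$ a.e.\ on $\{u=0\}$. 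The exponent bookkeeping is accurate, the role of the restrictions $t<p-1$ (positivity of $q$) and $t<1$ (domination of $t^{-(p-1)(\sigma-t)/(\sigma-p+1)}$ by $t^{-(p-1)\sigma/(\sigma-p+1)}$) is correctly identified, and the vanishing of the remainder $R_\eta$ uses $V\in L^1_{\mathrm{loc}}$ and $\sigma-t>0$ exactly as it should.
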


\begin{lemma}\label{LE_tech2}
Let $s\geq\frac{2p\sigma}{\sigma-p+1}$ be fixed. Then there exists a
constant $C>0$ such that for every nonnegative weak solution $u$ of
equation \eqref{EQ_gen}, every function $\varphi \in
\operatorname{Lip}(\Omega)$ with compact support and $0 \leq \varphi \leq
1$ and every $t\in(0,\min\{1,p-1,\frac{\sigma-p+1}{2(p-1)}\})$ one
has
\begin{align}
  \label{2.10}\int_\Omega\varphi^su^\sigma V\,d\mu\leq C t^{-\frac{p-1}{p}-\frac{(p-1)^2\sigma}{p(\sigma-p+1)}}&
     \pa{\int_{\Omega\setminus K} V^{-\frac{(t+1)(p-1)}{\sigma-(t+1)(p-1)}}
     \abs{\nabla\varphi}^\frac{p\sigma}{\sigma-(t+1)(p-1)}\,d\mu}^\frac{\sigma-(t+1)(p-1)}{p\sigma}\\
  \nonumber&\pa{\int_\Omega{V^{-\frac{p-t-1}{\sigma-p+1}}\abs{\nabla\varphi}^{\frac{p(\sigma-t)}{\sigma-p+1}}}\,d\mu}^\frac{p-1}{p}
     \pa{\int_{\Omega\setminus K}\varphi^su^\sigma V\,d\mu}^\frac{(t+1)(p-1)}{p\sigma},
\end{align}
with $K=\{x\in \Omega : \varphi(x)=1\}$ and $d\mu$ is the measure on $M$
with density $a$, as defined in \eqref{meas_a}.
\end{lemma}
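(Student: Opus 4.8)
The plan is to deduce the inequality \eqref{2.10} from Lemma \ref{LE_tech1} by combining it with two successive applications of H\"older's inequality, using only the test function $\psi=\varphi^s$ in \eqref{EQ_weakSolGen}; throughout we may assume the right-hand side of \eqref{2.10} is finite, since otherwise there is nothing to prove. Since $s\geq\frac{2p\sigma}{\sigma-p+1}>1$, the function $\psi=\varphi^s$ is an admissible test function, and using $\nabla\psi=s\varphi^{s-1}\nabla\varphi$, the Cauchy--Schwarz inequality, and the fact that $\nabla\varphi=0$ a.e.\ on $K=\{\varphi=1\}$, one first obtains the basic energy inequality
\begin{equation*}
\int_\Omega V u^\sigma\varphi^s\,d\mu\;\leq\; s\int_{\Omega\setminus K}\abs{\nabla u}^{p-1}\abs{\nabla\varphi}\,\varphi^{s-1}\,d\mu .
\end{equation*}

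Next I would absorb the gradient term. Since $s\geq p$ (a consequence of $s\geq\frac{2p\sigma}{\sigma-p+1}$), we have the pointwise identity
\begin{equation*}
\abs{\nabla u}^{p-1}\abs{\nabla\varphi}\,\varphi^{s-1}
=\pa{\varphi^s u^{-t-1}\abs{\nabla u}^p}^{\frac{p-1}{p}}\pa{\varphi^{s-p}u^{(t+1)(p-1)}\abs{\nabla\varphi}^p}^{\frac1p},
\end{equation*}
to which one applies H\"older on $\Omega\setminus K$ with exponents $\frac{p}{p-1}$ and $p$. The resulting first factor is controlled by Lemma \ref{LE_tech1}: dropping the nonnegative potential term in \eqref{EQ_2.6Lemma_tech1} and multiplying by $p/t$ bounds $\int_\Omega\varphi^s u^{-t-1}\abs{\nabla u}^p\,d\mu$, and raising to the power $\frac{p-1}{p}$ produces exactly the $t$-power $t^{-\frac{p-1}{p}-\frac{(p-1)^2\sigma}{p(\sigma-p+1)}}$ together with the factor $\pa{\int_\Omega V^{-\frac{p-t-1}{\sigma-p+1}}\abs{\nabla\varphi}^{\frac{p(\sigma-t)}{\sigma-p+1}}\,d\mu}^{\frac{p-1}{p}}$ of \eqref{2.10}. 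It then remains to bound $\pa{\int_{\Omega\setminus K}\varphi^{s-p}u^{(t+1)(p-1)}\abs{\nabla\varphi}^p\,d\mu}^{1/p}$ by the product of the two remaining factors of \eqref{2.10}.

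For this last estimate one performs the hole-filling split. From $t<\frac{\sigma-p+1}{2(p-1)}$ one gets $\sigma-(t+1)(p-1)>\frac{\sigma-p+1}{2}>0$, and combined with $s\geq\frac{2p\sigma}{\sigma-p+1}$ this yields $s\,\frac{\sigma-(t+1)(p-1)}{\sigma}\geq p$, that is $s-p\geq\frac{s(t+1)(p-1)}{\sigma}$. Because $0\leq\varphi\leq1$, we may replace $\varphi^{s-p}$ by the larger $\varphi^{\frac{s(t+1)(p-1)}{\sigma}}$ and apply H\"older on $\Omega\setminus K$ with exponents $\frac{\sigma}{\sigma-(t+1)(p-1)}$ and $\frac{\sigma}{(t+1)(p-1)}$ (both $\geq1$) to
\begin{equation*}
\varphi^{\frac{s(t+1)(p-1)}{\sigma}}u^{(t+1)(p-1)}\abs{\nabla\varphi}^p
=\pa{V^{-\frac{(t+1)(p-1)}{\sigma-(t+1)(p-1)}}\abs{\nabla\varphi}^{\frac{p\sigma}{\sigma-(t+1)(p-1)}}}^{\frac{\sigma-(t+1)(p-1)}{\sigma}}\pa{\varphi^s u^\sigma V}^{\frac{(t+1)(p-1)}{\sigma}};
\end{equation*}
taking $p$-th roots produces precisely the factors $\pa{\int_{\Omega\setminus K}V^{-\frac{(t+1)(p-1)}{\sigma-(t+1)(p-1)}}\abs{\nabla\varphi}^{\frac{p\sigma}{\sigma-(t+1)(p-1)}}\,d\mu}^{\frac{\sigma-(t+1)(p-1)}{p\sigma}}$ and $\pa{\int_{\Omega\setminus K}\varphi^s u^\sigma V\,d\mu}^{\frac{(t+1)(p-1)}{p\sigma}}$ of \eqref{2.10}. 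Chaining the three displays and absorbing the fixed factor $s$ and the constant of Lemma \ref{LE_tech1} into a single $C$ gives \eqref{2.10}.

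I expect the main point requiring care to be the exponent bookkeeping: one must verify that all H\"older exponents appearing are $\geq1$, that the three outer exponents $\frac{\sigma-(t+1)(p-1)}{p\sigma}$, $\frac{p-1}{p}$, $\frac{(t+1)(p-1)}{p\sigma}$ sum to $1$ (so that the exponent on the self-referential factor $\int_{\Omega\setminus K}\varphi^s u^\sigma V$ is strictly less than $1$, as is needed for the later hole-filling iteration), and that the inequality $s-p\geq\frac{s(t+1)(p-1)}{\sigma}$ holds for every admissible $t$ --- which is exactly what the hypotheses $s\geq\frac{2p\sigma}{\sigma-p+1}$ and $t<\frac{\sigma-p+1}{2(p-1)}$ secure, and which is what makes the replacement of $\varphi^{s-p}$ legitimate. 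The only technical caveat, namely that the use of $u^{-t-1}$ requires working first with $u+\eps$ and letting $\eps\to0^+$, is already incorporated in Lemma \ref{LE_tech1} and needs no additional work.
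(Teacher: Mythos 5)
Your proof is correct and coincides with the argument this paper relies on (the paper itself gives no proof but cites \cite[Lemma 2.4]{MMP1}): testing with $\varphi^s$, splitting $|\nabla u|^{p-1}\varphi^{s-1}|\nabla\varphi|$ by H\"older so that the energy factor is absorbed through Lemma \ref{LE_tech1}, and then applying a second H\"older step with exponents $\frac{\sigma}{\sigma-(t+1)(p-1)}$ and $\frac{\sigma}{(t+1)(p-1)}$ over $\Omega\setminus K$ is exactly the route taken there. Your exponent bookkeeping also checks out, in particular $s-p\geq\frac{s(t+1)(p-1)}{\sigma}$ follows from $s\geq\frac{2p\sigma}{\sigma-p+1}$ and $t<\frac{\sigma-p+1}{2(p-1)}$, and the three outer exponents sum to $1$ as required.
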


From Lemma \ref{LE_tech2} we immediately deduce
\begin{cor}
Under the same assumptions of Lemma \ref{LE_tech2} there exists a
constant $C>0$, independent of $u$, $\varphi$ and $t$, such that
\begin{align}\label{EQ_2.6Lemma_tech2}
   &\pa{\int_\Omega{\varphi^su^{\sigma}V\,d\mu}}^{1-\frac{\pa{t+1}\pa{p-1}}{p\sigma}} \\
   \nonumber &\,\,\leq C t^{-\frac{p-1}{p}-\frac{\pa{p-1}^2\sigma}{p\pa{\sigma-p+1}}}
   \pa{\int_\Omega{V^{-\frac{p-t-1}{\sigma-p+1}}\abs{\nabla\varphi}^{\frac{p(\sigma-t)}{\sigma-p+1}}}\, d\mu}^{\frac{p-1}{p}}
   \pa{\int_\Omega V^{-\frac{\pa{t+1}\pa{p-1}}{\sigma-\pa{t+1}\pa{p-1}}}\abs{\nabla\varphi}^{\frac{p\sigma}{\sigma-\pa{t+1}\pa{p-1}}}\,d\mu}^{\frac{\sigma-\pa{t+1}\pa{p-1}}{p\sigma}}.
\end{align}
\end{cor}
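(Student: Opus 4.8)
The plan is to obtain \eqref{EQ_2.6Lemma_tech2} directly from the estimate \eqref{2.10} of Lemma \ref{LE_tech2} by three elementary steps: enlarging the domains of integration, absorbing a power of the left-hand side, and disposing of a degenerate case. Since all the analytic work is already done in Lemma \ref{LE_tech2}, no serious obstacle is expected.

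First I would fix $u$, $\varphi$ and $t$ as in Lemma \ref{LE_tech2} and set
\[
A:=\int_\Omega \varphi^s u^\sigma V\,d\mu, \qquad K=\{x\in\Omega:\varphi(x)=1\}.
\]
Because $\varphi$ has compact support in $\Omega$, $a\in\operatorname{Lip}_{\text{loc}}(\Omega)$ is locally bounded and $u\in L^\sigma_{\text{loc}}(\Omega,V\,d\mu_0)$, one has $A<\infty$. Since $\Omega\setminus K\subseteq\Omega$ and every integrand occurring in \eqref{2.10} is nonnegative, both
\[
\int_{\Omega\setminus K} V^{-\frac{(t+1)(p-1)}{\sigma-(t+1)(p-1)}}\abs{\nabla\varphi}^{\frac{p\sigma}{\sigma-(t+1)(p-1)}}\,d\mu
\quad\text{and}\quad
\int_{\Omega\setminus K}\varphi^s u^\sigma V\,d\mu
\]
are bounded above by the corresponding integrals over all of $\Omega$. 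Inserting these bounds into \eqref{2.10} yields
\[
A\ \leq\ C\,t^{-\frac{p-1}{p}-\frac{(p-1)^2\sigma}{p(\sigma-p+1)}}\,
B^{\frac{\sigma-(t+1)(p-1)}{p\sigma}}\,D^{\frac{p-1}{p}}\,A^{\frac{(t+1)(p-1)}{p\sigma}},
\]
with $B:=\int_\Omega V^{-\frac{(t+1)(p-1)}{\sigma-(t+1)(p-1)}}\abs{\nabla\varphi}^{\frac{p\sigma}{\sigma-(t+1)(p-1)}}\,d\mu$ and $D:=\int_\Omega V^{-\frac{p-t-1}{\sigma-p+1}}\abs{\nabla\varphi}^{\frac{p(\sigma-t)}{\sigma-p+1}}\,d\mu$, where $C$ is the constant of Lemma \ref{LE_tech2}, hence independent of $u$, $\varphi$ and $t$.

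Next I would check that the exponent $1-\frac{(t+1)(p-1)}{p\sigma}$ is strictly positive: the bound $t<\frac{\sigma-p+1}{2(p-1)}$ gives $(t+1)(p-1)<\frac{\sigma+p-1}{2}<\sigma<p\sigma$, using $\sigma>p-1$ and $p>1$. Consequently, if $A=0$ the claimed inequality \eqref{EQ_2.6Lemma_tech2} is trivially true, its left-hand side being $0$ and its right-hand side nonnegative; and if $A>0$ we may divide the last display by the finite, positive quantity $A^{\frac{(t+1)(p-1)}{p\sigma}}$, which produces exactly \eqref{EQ_2.6Lemma_tech2}. The only points requiring a little care are the finiteness of $A$, needed to legitimize this division, and the positivity of the absorbed exponent; both follow at once from the standing hypotheses $p>1$, $\sigma>p-1$ together with the range of $t$ allowed in Lemma \ref{LE_tech2}.
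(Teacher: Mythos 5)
Your proof is correct and is exactly the argument the paper has in mind when it says the corollary is "immediately deduced" from Lemma \ref{LE_tech2}: enlarge the integrals over $\Omega\setminus K$ to integrals over $\Omega$, then absorb the factor $\pa{\int_\Omega\varphi^s u^\sigma V\,d\mu}^{\frac{(t+1)(p-1)}{p\sigma}}$ into the left-hand side. The checks you include — finiteness of $\int_\Omega\varphi^s u^\sigma V\,d\mu$, positivity of the remaining exponent from $t<\frac{\sigma-p+1}{2(p-1)}$ and $\sigma>p-1$, and the trivial case where that integral vanishes — are precisely the points that make the division legitimate, so nothing is missing.
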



\section{Proof of Theorem \ref{thm_intro3}}\label{sec3}

We divide the proof of Theorem \ref{thm_intro3} in three cases,
depending on which of the conditions (HP1), (HP2) or (HP3) is
assumed to hold.

\begin{proof}[Proof of Theorem \ref{thm_intro3}\,.] $(a)$ Assume that condition (HP1) holds (see \eqref{EQ_hp1}). For any
fixed $\delta\in (0,\delta_0)$ let $t:= - \frac{1}{\log \delta}$. Fix any
$C_1\geq\max\left\{\frac{4(C_0-p+1)}{p\sigma}, 1\right\}$ with $C_0$ as in condition
\eqref{EQ_hp1}, define for every $x\in \Omega$
\begin{equation}\label{8}
 \varphi(x)=\begin{cases}\begin{array}{ll}
   1&\quad\text{for }d(x)>\delta,\\
   \pa{\frac{d(x)}{\delta}}^{C_1t}&\quad\text{for }d(x)\leq \delta
   \end{array}\end{cases}
\end{equation}
and for $n\in\enne$
\begin{equation}\label{9}
 \eta_n(x)=\begin{cases}\begin{array}{ll}
   1&\quad\text{for }d(x)>\frac{\delta} n,\\
   \frac{2n }{\delta}d(x) -1 &\quad\text{for }\frac{\delta}{2 n}\leq d(x)\leq \frac{\delta} n,\\
   0&\quad\text{for }d(x)<\frac{\delta}{2n}.
 \end{array}\end{cases}
\end{equation}
Let
\begin{equation}\label{10}
\varphi_n(x)=\eta_n(x)\varphi(x)\qquad\text{for all }x\in \Omega;
\end{equation}
then $\varphi_n\in \operatorname{Lip}_c(\Omega)$ with
$0\leq\varphi_n\leq1$. Moreover, we have
$$\nabla\varphi_n=\eta_n\nabla\varphi+\varphi\nabla\eta_n\qquad \text{a.e. in } \Omega\,,$$ and for
every $b\geq1$ $$|\nabla\varphi_n|^b\leq
2^{b-1}\pa{|\nabla\varphi|^b+\varphi^b|\nabla\eta_n|^b}\qquad
\text{a.e. in } \Omega.$$ Now we use $\varphi_n$ in formula
\eqref{EQ_2.6Lemma_tech1} of Lemma \ref{LE_tech1} with any fixed
$s\geq\frac{p\sigma}{\sigma-p+1}$ and deduce that, for some positive
constant $C$ and for every $n\in\enne$ and every small enough $t>0$,
we have
\begin{align}
 \label{20}&\int_\Omega{V u^{\sigma-t}\varphi_n^s}\,d\mu\\
 \nonumber &\,\,\leq Ct^{-\frac{(p-1)\sigma}{\sigma-p+1}}\int_\Omega{V^{-\frac{p-t-1}{\sigma-p+1}}\abs{\nabla\varphi_n}^{\frac{p(\sigma-t)}{\sigma-p+1}}}\,d\mu\\
 \nonumber &\,\,= C  t^{-\frac{(p-1)\sigma}{\sigma-p+1}} \int_\Omega
   V^{-\beta+\frac{t}{\sigma-p+1}}\abs{\nabla\varphi_n}^{\frac{p(\sigma-t)}{\sigma-p+1}}d\mu\\
 \nonumber&\,\,\leq Ct^{-\frac{(p-1)\sigma}{\sigma-p+1}}2^{\frac{p(\sigma-t)}{\sigma-p+1}-1}
   \sq{\int_\Omega{V^{-\beta+\frac{t}{\sigma-p+1}}\abs{\nabla\varphi}^{\frac{p(\sigma-t)}{\sigma-p+1}}}\,d\mu+
   \int_{\mathcal S^{\frac{\delta}n}\setminus
   \mathcal S^{\frac{\delta}{2n}}}{V^{-\beta+\frac{t}{\sigma-p+1}}\varphi^{\frac{p(\sigma-t)}{\sigma-p+1}}}\abs{\nabla\eta_n}^{\frac{p(\sigma-t)}{\sigma-p+1}}\,d\mu}\\
 \nonumber&\,\,\leq Ct^{-\frac{(p-1)\sigma}{\sigma-p+1}}\sq{I_1+I_2},
\end{align}
where
\begin{align*}
 I_1&:= \int_{\Omega\setminus \mathcal S^\delta}V^{-\beta+\frac{t}{\sigma-p+1}}|\nabla\varphi|^{\frac{p(\sigma-t)}{\sigma-p+1}}d\mu,\\
 I_2&:=  \int_{\mathcal S^{\frac{\delta}{n}}\setminus \mathcal S^{\frac{\delta}{2n}}}\varphi^{\frac{p(\sigma-t)}{\sigma-p+1}}|\nabla\eta_n|^{\frac{p(\sigma-t)}{\sigma-p+1}}V^{-\beta+\frac{t}{\sigma-p+1}}d\mu.
\end{align*}
By \eqref{8}, \eqref{9} and assumption (HP1) with
$\eps=\frac{t}{\sigma-p+1}$ (see equation \eqref{EQ_hp1}), for every
$n\in\enne$ and every small enough $t>0$ we have
\begin{align}
 \label{217a}I_2 & \leq \left(\sup_{\mathcal S^{\frac{\delta}{n}}\setminus \mathcal S^{\frac{\delta}{2n}}} \varphi\right)^{\frac{p(\sigma-t)}{\sigma-p+1}}\left(\frac {2n}{\delta}\right)^{\frac{p(\sigma-t)}{\sigma-p+1}}
   \int_{\mathcal S^{\frac{\delta}{n}}\setminus \mathcal S^{\frac{\delta}{2n}}} V^{-\beta + \frac{t}{\sigma-p+1}}\,d\mu \\
 \nonumber& \leq C \left(\frac{1}{n}\right)^{\frac{p(\sigma-t)}{\sigma-p+1}C_1 t}\left(\frac {2n}{\delta} \right)^{\frac{p(\sigma-t)}{\sigma-p+1}}
   \left(\frac{\delta}{n}\right)^{\alpha - \frac{C_0t}{\sigma-p+1}} \left|\log\left(\frac{\delta}{n}\right)\right|^k \\
 \nonumber& \leq C n^{-\alpha+\frac{C_0 t}{\sigma-p+1}+\frac{p(\sigma-t)}{\sigma-p+1}(-C_1t+1)} \delta^{\alpha-\frac{C_0 t}{\sigma-p+1}
   -\frac{p(\sigma-t)}{\sigma-p+1}}\left|\log\left(\frac{\delta}n\right)\right|^k\,.
\end{align}
By our choice of $C_1$, for every small enough $t>0$
\begin{equation}\label{217b}
- \alpha+\frac{C_0 t}{\sigma-p+1} +
\frac{p(\sigma-t)}{\sigma-p+1}(- C_1t+1)= \frac{t(C_0 -p\sigma C_1 +
pC_1 t-p)}{\sigma-p+1}\leq -\frac{t}{\sigma-p+1}<0.
\end{equation}
Moreover, since $t=-\frac{1}{\log \delta}$, we have
\[
 \delta^{\alpha-\frac{C_0 t}{\sigma-p+1}
   -\frac{p(\sigma-t)}{\sigma-p+1}}=\delta^{\frac{-C_0+p}{\sigma-p+1}t}=e^{\frac{- C_0+p}{\sigma-p+1}t\log
 \delta}=e^{\frac{C_0 - p}{\sigma-p+1}}
\]
In view of \eqref{217a} and \eqref{217b} for $\delta>0$ small enough, and
thus $t=-\frac 1{\log \delta}$ small enough, we obtain
\begin{equation}\label{218}
I_2 \leq C n^{-\frac{t}{\sigma-p+1}}\left|\log\left(\frac{\delta}{n}\right)\right|^k\,.
\end{equation}

In order to estimate $I_1$, we need the next

{\bf Claim:}\,\,  If
$f:(0,\infty)\rightarrow[0,\infty)$ is a nonnegative decreasing measurable
function and \eqref{EQ_hp1} holds, then for any
$\eps\in (0, \eps_0)$ and for any $\delta\in (0, \delta_0)$ we have
\begin{equation}\label{219d}
\int_{\mathcal S^\delta} f(d(x)) \pa{V(x)}^{-\beta+\eps}\,d\mu\leq
C\int_{0}^{\frac{\delta}{2}} f(r)r^{\alpha-C_0\eps-1}|\log r|^k\,dr
\end{equation}
for some positive constant $C$. In fact, we have that
\begin{equation*}
\begin{aligned}
\int_{\mathcal S^\delta} f\, V^{-\beta + \eps} d\mu & = \sum_{n=0}^\infty \int_{\mathcal S^{\frac{\delta}{2^n}} \setminus \mathcal S^{\frac{\delta}{2^{n+1}}} } f(d(x)) V^{-\beta + \eps}\, d\mu\\
&  \leq \sum_{n=0}^\infty f\left(\frac{\delta}{2^{n+1}}\right)\int_{\mathcal S^{\frac{\delta}{2^n}} \setminus \mathcal S^{\frac{\delta}{2^{n+1}}} } V^{-\beta + \eps} d\mu \\
 & \leq C \sum_{n=0}^\infty f\left(\frac{\delta}{2^{n+1}} \right)\left(\frac{\delta}{2^n} \right)^{\alpha-C_0\eps}\left|\log\left(\frac{\delta}{2^n} \right) \right|^k
\\ & \leq C \sum_{n=0}^\infty f\left(\frac{\delta}{2^{n+1}}\right)\left(\frac{\delta}{2^{n+2}}\right)^{\alpha- C_0\eps-1}\frac{\delta}{2^{n+1}}\left|\log\left(\frac{\delta}{2^{n+2}}\right) \right|^k\\
&\leq C \sum_{n=0}^\infty \int_{\frac{\delta}{2^{n+2}}}^{\frac{\delta}{2^{n+1}}} f(r) r^{\alpha-C_0\eps-1} |\log r|^k \, dr = C \int_0^{\frac{\delta}2} f(r) r^{\alpha- C_0\eps-1} |\log r|^k dr\,.
\end{aligned}
\end{equation*}
Hence the claim has been shown. Moreover, there holds
\begin{equation}\label{219a}
|\nabla \varphi(x)| \leq C_1 t \,\delta^{- C_1 t} \, (d(x))^{C_1 t -1}\quad \textrm{a.e. in}\:\: \Omega\,.
\end{equation}
Thus, using \eqref{219d}, \eqref{219a},
\begin{align*}
  I_1 &= \int_{\mathcal S^\delta} V^{-\beta+\frac{t}{\sigma-p+1}}(\delta^{-C_1 t}C_1 t (d(x))^{C_1t-1})^{\frac{p(\sigma-t)}{\sigma-p+1}}\, d\mu\\
  &\leq C  \delta^{-\frac{p(\sigma-t)}{\sigma-p+1}C_1 t}   \int_0^{\frac{\delta}2} t^{\frac{p(\sigma-t)}{\sigma-p+1}}
     r^{\frac{p(\sigma-t)}{\sigma-p+1}(C_1t-1)+\alpha-C_0\frac{t}{\sigma-p+1}-1}|\log r|^k\, dr.
\end{align*}
Now note that $$\delta^{-\frac{p(\sigma-t)}{\sigma-p+1}C_1
t}=e^{\frac{p(\sigma-t)}{\sigma-p+1}C_1}<e^\frac{p\sigma
C_1}{\sigma-p+1}$$ and that by our choice of $C_1$ we have
\begin{equation}\label{ed30}
\begin{aligned}
a&:= \frac{p(\sigma-t)}{\sigma-p+1}(C_1t-1)+\alpha-C_0\frac{t}{\sigma-p+1}\\
&\,=\frac t{\sigma-p+1}(-pC_1 t + p\sigma C_1 +p - C_0)\geq \frac{t}{2(\sigma-p+1)}>0\,.
\end{aligned}
\end{equation}
Then, by the above inequalities and performing the change of
variables $\xi:=a\log r$, we get
\begin{align}
  \label{11}I_1& \leq C t^{\frac{p(\sigma-t)}{\sigma-p+1}}\int_{0}^{\frac{\delta}{2}}
     r^{a}(- \log r)^k\,\frac{dr}{r}\\
  \nonumber&\leq C a^{-(k+1)} t^{\frac{p(\sigma-t)}{\sigma-p+1}} \int_{-\infty}^0 e^{\xi} (-\xi)^k\, d\xi\\
  \nonumber&\leq C\pa{\frac{t}{2(\sigma-p+1)}}^{-k-1}t^{\frac{p(\sigma-t)}{\sigma-p+1}}\\
  \nonumber&\leq C t^{\frac{p(\sigma-t)}{\sigma-p+1}-k-1}\,.
\end{align}
By \eqref{20}, \eqref{218} and \eqref{11}
\begin{align}
  \label{219g}\int_{\Omega\setminus\mathcal S^\delta} V u^{\sigma-t}\, d\mu\,& \leq \int_{\Omega} V u^{\sigma-t}\varphi_n^s\, d\mu\\
  \nonumber& \leq C t^{-\frac{(p-1)\sigma}{\sigma-p+1}}\left[ n^{-\frac{t}{\sigma-p+1}}\left|\log\left(\frac{\delta}{n}\right)\right|^k
    + t^{\frac{p(\sigma-t)}{\sigma-p+1}-k-1}\right].
\end{align}
Since $\delta>0$ is small and fixed, and thus $t=-\frac 1{\log \delta} < 1$
is also fixed, taking the $\liminf$ as $n\to \infty$ in
\eqref{219g} we obtain
\begin{equation}\label{219h}
\int_{\Omega\setminus \mathcal S^\delta} V u^{\sigma-t} d\mu \leq C t^{\frac{p(\sigma-t)}{\sigma-p+1}-k-1-\frac{(p-1)\sigma}{\sigma-p+1}}.
\end{equation}
Now observe that, for each small enough $t>0$,
$$\frac{p(\sigma-t)}{\sigma-p+1}-k-1-\frac{(p-1)\sigma}{\sigma-p+1} = \frac{p-1}{\sigma-p+1}-k-\frac{pt}{\sigma-p+1}=\beta-k-\frac{pt}{\sigma-p+1}\geq \delta_*>0\,.$$
Then, for any fixed sufficiently small $t>0$, we have
\[\int_{\Omega} V u^{\sigma-t}\chi_{\Omega\setminus \mathcal S^{e^{-1/t}}}\,d\mu = \int_{\Omega\setminus\mathcal S^{e^{-1/t}}} V u^{\sigma-t}\, d\mu \leq C t^{\delta_*}\,. \]
By Fatou's Lemma, taking the $\liminf$ as $t\to 0^+$ in the previous
inequality we obtain
\[ \int_\Omega V u^\sigma\, d\mu \leq 0,\]
which implies $u\equiv 0$ in $\Omega$.

\medskip

\noindent $(b)$ Assume that condition (HP2) holds (see
\eqref{EQ_hp2}). Let the functions $\varphi$, $\eta_n$ and
$\varphi_n$ be defined on $M$ as in formulas \eqref{8}, \eqref{9}
and \eqref{10}, with $\delta\in(0,\delta_0)$, $t=-\frac{1}{\log \delta}$,
$C_1\geq\max\set{\frac{4(C_0-p+1)}{p\sigma}, 1,\frac{2(p+C_0)}{\sigma-p+1}}$
and $C_0$ as in condition \eqref{EQ_hp2}. We now apply formula
\eqref{EQ_2.6Lemma_tech2}, using the family of functions
$\varphi_n\in \operatorname{Lip}_0(M)$ and any fixed
$s\geq\frac{2p\sigma}{\sigma-p+1}$. Therefore, we get
\begin{align*}
   \pa{\int_{\Omega}{\varphi_n^su^{\sigma}V\,d\mu}}^{1-\frac{\pa{t+1}\pa{p-1}}{p\sigma}} \leq& C t^{-\frac{p-1}{p}-\frac{\pa{p-1}^2\sigma}{p\pa{\sigma-p+1}}}
   \pa{\int_{\Omega}{V^{-\frac{p-t-1}{\sigma-p+1}}\abs{\nabla\varphi_n}^{\frac{p(\sigma-t)}{\sigma-p+1}}}\, d\mu}^{\frac{p-1}{p}}\\
   \nonumber &\quad\times
   \pa{\int_{\Omega} V^{-\frac{\pa{t+1}\pa{p-1}}{\sigma-\pa{t+1}\pa{p-1}}}\abs{\nabla\varphi_n}^{\frac{p\sigma}{\sigma-\pa{t+1}\pa{p-1}}}\,d\mu}^{\frac{\sigma-\pa{t+1}\pa{p-1}}{p\sigma}}.
\end{align*}
We now need need to estimate
  \begin{equation}\label{EQ_2.11in2_a}
    \int_{\Omega} V^{-\frac{p-t-1}{\sigma-p+1}}\abs{\nabla
    \varphi_n}^{\frac{p(\sigma-t)}{\sigma-p+1}}\,d\mu\qquad\text{ and
    }\qquad
    \int_{\Omega} V^{-\frac{(t+1)(p-1)}{\sigma-\pa{t+1}\pa{p-1}}}\abs{\nabla \varphi_n}^{\frac{p\sigma}{\sigma-\pa{t+1}\pa{p-1}}}\,d\mu.
  \end{equation}
Arguing as in the first part of the proof of the theorem, under the validity
of condition (HP1), with the only difference that the condition
$k<\beta$ there is replaced here by $k=\beta$, using \eqref{EQ_hp2}
we can deduce that
  \begin{equation}\label{15}
  \int_{\Omega} V^{-\frac{p-t-1}{\sigma-p+1}}\abs{\nabla \varphi_n}^{\frac{p(\sigma-t)}{\sigma-p+1}}\,d\mu\leq C\sq{n^{-\frac{t}{\sigma-p+1}}\left|\log\left(\frac{\delta}{n}\right)\right|^\beta
  +t^{\frac{p(\sigma-t)}{\sigma-p+1}-\beta-1}}.
  \end{equation}
In order to estimate the second integral in \eqref{EQ_2.11in2_a} we
start by defining $\Lambda = \frac{(p-1)\sigma
t}{(\sigma-p+1)\sq{\sigma-\pa{t+1}\pa{p-1}}}$, and we note that
\begin{equation}\label{12}
   \frac{(p-1)\sigma}{\pa{\sigma-p+1}^2}t<\Lambda<\frac{2(p-1)\sigma}{\pa{\sigma-p+1}^2}t<\eps^*
\end{equation}
for every small enough $t>0$, and that
  \[\frac{(t+1)(p-1)}{\sigma-\pa{t+1}\pa{p-1}}=\beta+\Lambda
  \qquad\text{ and }\qquad\frac{p\sigma}{\sigma-\pa{t+1}\pa{p-1}}=\alpha +\Lambda
  p\,.
  \]
By our definition of the functions $\varphi_n$, for every $n\in\enne$ and
every small enough $t>0$ we have
\begin{align}
  \label{14}\int_\Omega V^{-\beta-\Lambda}\abs{\nabla \varphi_n}^{\alpha+\Lambda p}\,d\mu& \leq
    C\sq{\int_\Omega V^{-\beta-\Lambda}{\eta_n}^{\alpha+\Lambda p}\abs{\nabla \varphi}^{\alpha+\Lambda p}\,d\mu
      +\int_\Omega V^{-\beta-\Lambda} \varphi^{\alpha+\Lambda p}\abs{\nabla \eta_n}^{\alpha+\Lambda p}\,d\mu}\\
  \nonumber  &\leq C\sq{\int_{\mathcal S^\delta} V^{-\beta-\Lambda} \abs{\nabla \varphi}^{\alpha+\Lambda p}\,d\mu
      +\int_{\mathcal S^{\frac{\delta}{n}}\setminus \mathcal S^{\frac{\delta}{2n}}} V^{-\beta-\Lambda} \varphi^{\alpha+\Lambda p}\abs{\nabla\eta_n}^{\alpha+\Lambda p}\,d\mu}\\
   \nonumber &:=C\pa{I_1+I_2}.
\end{align}
Now we use condition \eqref{EQ_hp2} with $\eps=\Lambda$, and we
obtain
   \begin{align*}
    I_2&= \int_{\mathcal S^{\frac{\delta}{n}}\setminus \mathcal S^{\frac{\delta}{2n}}} V^{-\beta-\Lambda} \varphi^{\alpha+\Lambda p}\abs{\nabla \eta_n}^{\alpha+\Lambda p}\,d\mu\\
       &\leq \pa{\sup_{\mathcal S^{\frac{\delta}{n}}\setminus \mathcal S^{\frac{\delta}{2n}}}\varphi}^{\alpha+\Lambda p}\pa{\frac{2n}{\delta}}^{\alpha+\Lambda p}\pa{ \int_{\mathcal S^{\frac{\delta}{n}}\setminus \mathcal S^{\frac{\delta}{2n}}}
           V^{-\beta-\Lambda}\,d\mu} \\
    &\leq Cn^{-\pa{\alpha+\Lambda p}C_1t}\pa{\frac{2n}{\delta}}^{\alpha+\Lambda p}\left(\frac{\delta}{n} \right)^{\alpha-C_0\Lambda}\left|\log\left(\frac{\delta}{n}\right)\right|^\beta \\
    &\leq C n^{-\pa{\alpha+\Lambda p}C_1t + p\Lambda +C_0\Lambda }\delta^{-p\Lambda - C_0\Lambda}\left|\log\left(\frac{\delta}{n}\right)\right|^\beta.
   \end{align*}
By our definition of $\Lambda$ and \eqref{12}, choosing $C_1>0$ big enough, we
easily find
   \begin{align}
     \label{13} -C_1\pa{\alpha+\Lambda p}t + \Lambda( p+C_0) &< -C_1 \alpha t -\frac{C_1 p  (p-1)\sigma}{(\sigma-p+1)^2} t^2 + \frac{2(p+C_0)(p-1)\sigma}{(\sigma-p+1)^2}t \\
     \nonumber&\leq- \hat C t<0,
   \end{align}
for some $\hat C>0,$ for any small enough $t>0$. Moreover by \eqref{12}, since
$t=-\frac{1}{\log \delta}$, we have
\[
\delta^{-p\Lambda-C_0\Lambda}\leq
\delta^{-\frac{2(p-1)\sigma (C_0+p) t}{(\sigma-p-1)^2}}=e^\frac{2(p-1)\sigma
(C_0+p)}{(\sigma-p-1)^2}.
\]
Thus, for any sufficiently small $\delta>0$,
\begin{equation}\label{EQ_EstI2}
    I_2 \leq C n^{ -\hat C t }\left| \log\left(\frac{\delta}{n}\right)\right|^\beta.
\end{equation}
In order to estimate $I_1$ we note that if
$f:[0,\infty)\rightarrow[0,\infty)$ is a nonnegative decreasing measurable
function and \eqref{EQ_hp2} holds, then for any small enough
$\eps>0$ and $\delta>0$ we have
\begin{equation}\label{219}
\int_{\mathcal S^\delta} f(d(x)) \pa{V(x)}^{-\beta-\eps}\,d\mu\leq
C\int_{0}^{\frac{\delta}2}f(r)r^{\alpha-C_0\eps-1}|\log
r|^\beta\,dr
\end{equation}
for some positive constant $C$, see \eqref{219d}. Thus, using \eqref{219a}, for every
small enough $t>0$ we have
  \begin{align*}
    I_1 &\leq \int_{\mathcal S^\delta} V^{-\beta-\Lambda}\pa{C_1 t \delta^{-C_1t} (d(x))^{C_1t-1}}^{\alpha+\Lambda p}\,d\mu \\
    &\leq C\int_{0}^{\frac{\delta}2} \delta^{-(\alpha+\Lambda p)C_1t} t^{\alpha+\Lambda p}
       r^{(\alpha+\Lambda p)(C_1t-1)+\alpha-C_0\Lambda-1}|\log r|^\beta\,dr \\
  \end{align*}
  Now, since $t=-\frac{1}{\log \delta}$, by  \eqref{12} we have
  \[
      \delta^{-(\alpha+\Lambda p)C_1t}= e^{(\alpha+\Lambda p)C_1}\leq
      e^{(\alpha+\eps^*p)C_1};
  \]
  moreover, in view of \eqref{13}, for some $\hat C_1>0$ and for every $t>0$ small enough
  \[
  \hat C t \leq b :=(\alpha+\Lambda
  p)(C_1t-1)+\alpha-C_0\Lambda\leq \hat C_1 t.
  \]
  With the change of variables $\xi=b\log r$, using the previous inequalities we find
  \begin{align}
    \label{EQ_EstI1}  I_1 &\leq C t^{\alpha+\Lambda p}\int_0^{\frac{\delta}2} r^b|\log r|^\beta\,\frac{dr}{r}= C t^{\alpha+\Lambda p} b^{-\beta-1}\pa{\int_{-\infty}^0
       e^{\xi}|\xi|^\beta\,d\xi}\\
    \nonumber&\leq Ct^{\alpha+\Lambda  p-\beta-1}.
  \end{align}
  From equations \eqref{14}, \eqref{EQ_EstI2} and \eqref{EQ_EstI1} it follows that
  \begin{align}\label{EQ_EstI1plusI2}
      \int_M V^{-\beta-\Lambda}\abs{\nabla \varphi_n}^{\alpha+\Lambda p}\,d\mu&\leq C\sq{t^{\alpha+\Lambda  p-\beta-1}+n^{ -\hat C t}\left|\log\left(\frac{\delta}{n}\right)\right|^\beta}.
  \end{align}
  From \eqref{EQ_2.6Lemma_tech2}, using \eqref{15} and \eqref{EQ_EstI1plusI2}  we have
  \begin{align*}
    \pa{\int_{\Omega\setminus\mathcal S^\delta}u^\sigma V\,d\mu}^{1-\frac{(t+1)(p-1)}{p\sigma}} &\leq \pa{\int_{\Omega}\varphi_n^su^\sigma V\,d\mu}^{1-\frac{(t+1)(p-1)}{p\sigma}}\\
    &\leq C t^{-\frac{p-1}{p}-\frac{\pa{p-1}^2\sigma}{p\pa{\sigma-p+1}}}\pa{\int_{\Omega}V^{-\frac{p-t-1}{\sigma-p+1}}
       \abs{\nabla\varphi_n}^{\frac{p\pa{\sigma-t}}{\sigma-p+1}}\,d\mu}^{\frac{p-1}{p}} \\
    &\,\,\,\,\,\, \times\pa{\int_\Omega V^{-\beta-\Lambda}\abs{\nabla\varphi_n}^{\alpha+\Lambda p}\,d\mu}^\frac{1}{\alpha+\Lambda p} \\
    &\leq C t^{-\frac{p-1}{p}-\frac{\pa{p-1}^2\sigma}{p\pa{\sigma-p+1}}}\sq{n^{-\frac{t}{\sigma-p+1}}\left| \log\left(\frac{\delta}{n}\right)\right|^\beta
       +t^{\frac{p(\sigma-t)}{\sigma-p+1}-\beta-1}}^{\frac{p-1}{p}} \\
    &\,\,\,\,\,\,\times\sq{t^{\alpha+\Lambda  p-\beta-1}+n^{-\hat C t}\left| \log\left(\frac{\delta}{n}\right)\right|^\beta}^{\frac{1}{\alpha+\Lambda p}}\,.
  \end{align*}
  By taking the $\liminf$ as $n\ra+\infty$ we get
  \begin{align*}
  \pa{\int_{\Omega\setminus\mathcal S^\delta} u^\sigma V\,d\mu}^{1-\frac{(t+1)(p-1)}{p\sigma}} &\leq C
    t^{-\frac{p-1}{p}-\frac{\pa{p-1}^2\sigma}{p\pa{\sigma-p+1}}+\frac{(p-1)(\sigma-t)}{\sigma-p+1}-\frac{(\beta+1)(p-1)}{p}+1-\frac{\beta+1}{\alpha+\Lambda p}}
  \end{align*}
  for every sufficiently small $t>0$, with $t=-\frac{1}{\log \delta}$. But
  \[
  -\frac{p-1}{p}-\frac{\pa{p-1}^2\sigma}{p\pa{\sigma-p+1}}+\frac{(p-1)(\sigma-t)}{\sigma-p+1}-\frac{(\beta+1)(p-1)}{p}+1-\frac{\beta+1}{\alpha+\Lambda p}
  =-\frac{(p-1)^2}{p\pa{\sigma-p+1}}t,
  \]
 hence for every small enough $t>0$ we have
  \begin{equation*}
    \pa{\int_{\Omega\setminus \mathcal S^{e^{-1/t}}}u^\sigma V\,d\mu}^{1-\frac{(t+1)(p-1)}{p\sigma}} \leq C
    t^{-\frac{(p-1)^2}{p\pa{\sigma-p+1}}t}\leq C,
  \end{equation*}
  that is
  \begin{equation*}
    \int_{\Omega\setminus \mathcal S^{e^{-1/t}}} u^\sigma V\,d\mu \leq C
  \end{equation*}
  uniformly in $t$, for $t>0$ sufficiently small. By taking the limit for $t\ra0^+$ we deduce
  \begin{equation}
    \int_{\Omega}u^\sigma V\,d\mu < +\infty,
  \end{equation}
  and thus $u\in L^\sigma(\Omega,Vd\mu)$. Now we exploit inequality \eqref{2.10} with the cutoff function
  $\varphi_n$, and using again \eqref{15} and \eqref{EQ_EstI1plusI2} we obtain
  \begin{align*}
    \int_\Omega\varphi_n^s u^\sigma V\,d\mu &\leq C t^{-\frac{p-1}{p}-\frac{\pa{p-1}^2\sigma}{p\pa{\sigma-p+1}}}
    \sq{n^{-\frac{t}{\sigma-p+1}}\left|\log\left(\frac{\delta}{n}\right)\right|^\beta+t^{\frac{p(\sigma-t)}{\sigma-p+1}-\beta-1}}^{\frac{p-1}{p}}
    \pa{\int_{\mathcal S^\delta}\varphi_n^s u^\sigma V\,d\mu}^{\frac{(t+1)(p-1)}{p\sigma}} \\
    &\,\,\,\,\,\,\times\sq{t^{\alpha+\Lambda  p-\beta-1}+n^{ -\hat C t}\left|\log\left(\frac{\delta}{n}\right)\right|^\beta}^{\frac{1}{\alpha+\Lambda p}}.
  \end{align*}
  Since $\varphi_n\equiv 1$ in $\Omega\setminus \mathcal S^\delta$ and $0\leq\varphi_n\leq 1$ in $\Omega$, for all $n\in\mathds{N}$
  \[
   \int_{\Omega\setminus\mathcal S^\delta} u^\sigma V\,d\mu\leq \int_\Omega\varphi_n^s u^\sigma V\,d\mu, \qquad \int_{\mathcal S^\delta}\varphi_n^s u^\sigma V\,d\mu \leq \int_{\mathcal S^\delta} u^\sigma V\,d\mu.
  \]
  Using previous inequalities and taking the $\liminf$ as $n\ra+\infty$ we
  get
  \begin{align*}
    \int_{\Omega\setminus\mathcal S^\delta} u^\sigma V\,d\mu &\leq C t^{-\frac{p-1}{p}-\frac{\pa{p-1}^2\sigma}{p\pa{\sigma-p+1}}+\frac{(p-1)(\sigma-t)}{\sigma-p+1}-\frac{(\beta+1)(p-1)}{p}
    +1-\frac{\beta+1}{\alpha+\Lambda p}}\pa{\int_{\mathcal S^\delta} u^\sigma V\,d\mu}^{\frac{(t+1)(p-1)}{p\sigma}} \\
    &= C t^{-\frac{(p-1)^2}{p\pa{\sigma-p+1}}t}\pa{\int_{\mathcal S^\delta} u^\sigma V\,d\mu}^{\frac{(t+1)(p-1)}{p\sigma}} \leq C \pa{\int_{\mathcal S^\delta} u^\sigma V\,d\mu}^{\frac{(t+1)(p-1)}{p\sigma}}
  \end{align*}
  uniformly for $t>0$ sufficiently small, with $t=-\frac{1}{\log \delta}$. Since $u\in L^\sigma(\Omega,Vd\mu)$,
  \[
  \int_{\mathcal S^\delta} u^\sigma V\,d\mu \ra 0 \quad \text{ as }\, \delta\ra 0.
  \]
  Moreover $\frac{(t+1)(p-1)}{p\sigma}\ra\frac{p-1}{p\sigma}>0$ as $\delta\ra 0$. It follows that
  \[
   \int_{\Omega} u^\sigma V\,d\mu = \lim_{\delta\ra 0}\int_{\Omega\setminus\mathcal S^\delta}u^\sigma V\,d\mu=0,
  \]
  which implies $u\equiv0$ in $\Omega$.

\medskip

\noindent $(c)$ Assume that condition (HP3) holds (see
\eqref{EQ_hp3}). Consider the functions $\varphi$, $\eta_n$ and
$\varphi_n$ defined in \eqref{8}, \eqref{9} and \eqref{10}, with
$\delta>0$ small enough, $t=-\frac{1}{\log \delta}$,
$C_1>0$ as in $b)$ and $C_0$ as in condition
\eqref{EQ_hp3}. Arguing as in $a)$, by formula
\eqref{EQ_2.6Lemma_tech1} with any fixed
$s\geq\frac{p\sigma}{\sigma-p+1}$, we see that
\begin{align}
 \label{2}\int_\Omega{V u^{\sigma-t}\varphi_n^s}\,d\mu& \leq
   Ct^{-\frac{(p-1)\sigma}{\sigma-p+1}}\int_\Omega{V^{-\frac{p-t-1}{\sigma-p+1}}\abs{\nabla\varphi_n}^{\frac{p(\sigma-t)}{\sigma-p+1}}}\,d\mu\\
 \nonumber&\leq Ct^{-\frac{(p-1)\sigma}{\sigma-p+1}}
   \sq{\int_\Omega{V^{-\frac{p-t-1}{\sigma-p+1}}\abs{\nabla\varphi}^{\frac{p(\sigma-t)}{\sigma-p+1}}}\,d\mu+
   \int_{\mathcal S^{\frac{\delta}{n}}\setminus
   \mathcal S^{\frac{\delta}{2n}}}{V^{-\frac{p-t-1}{\sigma-p+1}}\varphi^{\frac{p(\sigma-t)}{\sigma-p+1}}}\abs{\nabla\eta_n}^{\frac{p(\sigma-t)}{\sigma-p+1}}\,d\mu}\\
 \nonumber&:=Ct^{-\frac{(p-1)\sigma}{\sigma-p+1}}\sq{I_1+I_2},
\end{align}
for some positive constant $C$ and for every $n\in\enne$ and every
small enough $t>0$. Now, recalling the definitions of $\varphi$ and
$\eta_n$, by condition \eqref{EQ_hp3} with
$\eps=\frac{t}{\sigma-p+1}$, for every small enough $t>0$ we have
\begin{align*}
  I_2&\leq \pa{\sup_{ \mathcal S^{\frac{\delta}{n}}\setminus
   \mathcal S^{\frac{\delta}{2n}}  }\varphi}^{\frac{p(\sigma-t)}{\sigma-p+1}}\pa{\frac{2n}{\delta}}^{\frac{p(\sigma-t)}{\sigma-p+1}}
     \int_{ \mathcal S^{\frac{\delta}{n}}\setminus
   \mathcal S^{\frac{\delta}{2n}}  }V^{-\beta+\frac{t}{\sigma-p+1}}\,d\mu\\
  &\leq Cn^{-\frac{p(\sigma-t)}{\sigma-p+1}C_1t}\pa{\frac{2 n}{\delta}}^{\frac{p(\sigma-t)}{\sigma-p+1}}
     \pa{\frac{\delta}n}^{\alpha-\frac{C_0t}{\sigma-p+1}}\left| \log\left(\frac{\delta}{n}\right)\right|^k e^{-\frac{\theta t}{\sigma-p+1} \left|\log\left(\frac{\delta}{n} \right)\right|^\tau}\\
  &=C n^{-\alpha+\frac{C_0 t}{\sigma-p+1}+\frac{p(\sigma-t)}{\sigma-p+1}(-C_1t+1)} \delta^{\alpha-\frac{C_0 t}{\sigma-p+1}
   -\frac{p(\sigma-t)}{\sigma-p+1}}\left|\log\left(\frac{\delta}n\right)\right|^k    e^{-\frac{\theta t}{\sigma-p+1}\left|\log\left(\frac{\delta}{n}\right)
     \right|^\tau}.
\end{align*}
Note that since $t=-\frac{1}{\log \delta}$, we have
\[
 \delta^{\alpha-\frac{C_0 t}{\sigma-p+1}
   -\frac{p(\sigma-t)}{\sigma-p+1}}=\delta^{\frac{-C_0+p}{\sigma-p+1}t}=e^{\frac{- C_0+p}{\sigma-p+1}t\log
 \delta}=e^{\frac{C_0 - p}{\sigma-p+1}}\,.
\]
Thus, by our choice of $C_1$, if $t>0$ is sufficiently small, we
have
\begin{equation}\label{1}
I_2\leq C n^{-\frac{t}{\sigma-p+1}}\left|\log\left(\frac{\delta}{n}\right)\right|^k.
\end{equation}

In order to estimate $I_1$ we note that if
$f:[0,\infty)\rightarrow[0,\infty)$ is a nonnegative measurable decreasing
function and \eqref{EQ_hp3} holds, then for any small enough
$\eps>0$ and any sufficiently large $R>0$ we have
\begin{equation}\label{2.19}
\int_{\mathcal S^\delta} f(d(x)) \pa{V(x)}^{-\beta+\eps}\,d\mu\leq
C\int_0^{\frac{\delta}{2}}  f(r)r^{\alpha-C_0\eps-1}|\log
r|^ke^{-\eps\theta|\log r|^\tau}\,dr
\end{equation}
Inequality \eqref{2.19} can be obtained in a similar way as \eqref{219d}.

Now, using \eqref{219a} and \eqref{2.19} with
$\eps=\frac{t}{\sigma-p+1}$, we obtain that for every small enough
$t>0$ with $t=-\frac{1}{\log \delta}$
\begin{align*}
  I_1&\leq \int_{\mathcal S^\delta}
     V^{-\beta+\frac{t}{\sigma-p+1}}\pa{C_1t \delta^{- C_1t}(d(x))^{C_1t-1}}^\frac{p(\sigma-t)}{\sigma-p+1}\,d\mu\\
  &\leq  C \delta^{-\frac{p(\sigma-t)C_1t}{\sigma-p+1}}\int_0^{\frac{\delta}{2}} t^\frac{p(\sigma-t)}{\sigma-p+1}r^{\frac{p(\sigma-t)(C_1t-1)}{\sigma-p+1}+\alpha-\frac{C_0t}{\sigma-p+1}-1}|\log r|^k
     e^{-\frac{t\theta}{\sigma-p+1}|\log r|^\tau}\,dr.
\end{align*}
Note that
\[
\delta^{-\frac{p(\sigma-t)C_1t}{\sigma-p+1}}=e^{\frac{p(\sigma-t)C_1}{\sigma-p+1}}\leq e^{\frac{p\sigma
C_1}{\sigma-p+1}}.
\]
Thus, with the change of variable $r=e^{-\xi}$, we deduce
\begin{align*}
  I_1&\leq Ct^\frac{p(\sigma-t)}{\sigma-p+1}\int_0^{\frac{\delta}{2}} r^{\frac{p(\sigma-t)(C_1t-1)}{\sigma-p+1}+\alpha-\frac{C_0t}{\sigma-p+1}}|\log r|^k
     e^{-\frac{t\theta}{\sigma-p+1}|\log r|^\tau}\,r^{-1}dr.
\\
     &\leq Ct^\frac{p(\sigma-t)}{\sigma-p+1}\int_0^{+\infty} e^{-a \xi}\xi^k
        e^{-\frac{t\theta}{\sigma-p+1}\xi^\tau}\,d\xi,
\end{align*}
with $a$ defined in \eqref{ed30}. Now recall that by our choice of $C_1$, for $t>0$ small enough, we
have $a>0$. Hence, setting
$\rho=\pa{\frac{t\theta}{\sigma-p+1}}^\frac{1}{\tau}\xi$, we have
\begin{align}
  \label{3}I_1\leq Ct^\frac{p(\sigma-t)}{\sigma-p+1}\int_0^{+\infty}\xi^ke^{-\frac{t\theta}{\sigma-p+1}\xi^\tau}\,d\xi
  =Ct^\frac{p(\sigma-t)}{\sigma-p+1}\pa{\frac{t\theta}{\sigma-p+1}}^{-\frac{k+1}{\tau}}\int_0^{+\infty}\rho^ke^{-\rho^\tau}\,d\rho
  \leq Ct^{\frac{p(\sigma-t)}{\sigma-p+1}-\frac{k+1}{\tau}}.
\end{align}
From \eqref{2}, \eqref{1} and \eqref{3} we conclude that for every
$n\in\enne$ and every small enough $t=-\frac{1}{\log \delta}>0$ we have
\[
 \int_{\Omega\setminus\mathcal S^\delta}{V u^{\sigma-t}}\,d\mu\leq\int_\Omega{V u^{\sigma-t}\varphi_n^s}\,d\mu\leq
    Ct^{-\frac{(p-1)\sigma}{\sigma-p+1}}\sq{t^{\frac{p(\sigma-t)}{\sigma-p+1}-\frac{k+1}{\tau}}+n^{-\frac{t}{\sigma-p+1}}\left|\log\left(
    \frac{\delta}{n}\right)\right|^k}
\]
for some fixed positive constant $C$. Passing to the limit as
$n\rightarrow+\infty$ in the previous inequality yields
\begin{equation}\label{4}
  \int_{\Omega\setminus\mathcal S^\delta}{V u^{\sigma-t}}\,d\mu\leq
  Ct^{-\frac{(p-1)\sigma}{\sigma-p+1}+\frac{p(\sigma-t)}{\sigma-p+1}-\frac{k+1}{\tau}}.
\end{equation}
Now note that by our assumptions on $\tau,k$ we have
\[
-\frac{(p-1)\sigma}{\sigma-p+1}+\frac{p(\sigma-t)}{\sigma-p+1}-\frac{k+1}{\tau}=
\frac{\sigma}{\sigma-p+1}-\frac{k+1}{\tau}-\frac{pt}{\sigma-p+1}\geq\frac{1}{2}\pa{\frac{\sigma}{\sigma-p+1}-\frac{k+1}{\tau}}:=\delta_*>0
\]
for every small enough $t=-\frac{1}{\log \delta}>0$. Thus \eqref{4} yields
\begin{equation}\label{5}
  \int_{\Omega\setminus \mathcal S^{e^{-1/t}}}{V u^{\sigma-t}}\,d\mu\leq  Ct^{\delta_*}
\end{equation}
for every small enough $t>0$. Passing to the $\liminf$ as $t$ tends
to $0^+$ in \eqref{5}, we conclude by an application of Fatou's
Lemma that
\[
\int_{\Omega}{V u^{\sigma}}\,d\mu=0,
\]
so that $u\equiv0$ on $\Omega$. \end{proof}

\section{Counterexamples}\label{cex}
To begin, we show that in general hypothesis $(HP3)$ does not imply hypothesis $(HP1)$.

\begin{exe}\label{exindip}
Let $\sigma>1, p>1$, and let $a\in C^1(\Omega), a>0$ with
\[a(x):= \begin{cases}
1  &  \,\,  \textrm{if} \    \, d(x) >2\delta^*, \\
d(x)^{\alpha-1}|\log d(x)|^{\beta_0}e^{-\beta\theta |\log d(x)|^\tau}   & \ \textrm{if} \  d(x)\leq\delta^*   \,, \\
\end{cases}
\]
with $\delta^*>0$, $\beta_0>\beta$, $\theta>0$,
$\tau>\max\left\{\frac{\sigma-p+1}{\sigma}(\beta_0+1), 1\right\}$.
Define
\[V(x):= e^{-\theta|\log d(x)|^\tau} \quad \textrm{for all}\;\; x\in \Omega\,.\]
Then, in view of \eqref{meas_a}, for all $\delta>0$ sufficiently small and every $\varepsilon>0$
\begin{equation*}
\begin{aligned}
\int_{\mathcal S^\delta\setminus \mathcal S^{\delta/2}} V^{-\beta+\varepsilon}(x)\, d\mu &\leq e^{-\varepsilon \theta |\log
\delta|^\tau}\int_{\mathcal S^\delta\setminus \mathcal S^{\delta/2}}
e^{\beta\theta |\log d(x)|^\tau}\,d\mu\\
&\leq C e^{-\varepsilon \theta |\log \delta|^\tau}
\int_{\delta/2}^\delta r^{\alpha-1}|\log r|^{\beta_0} \,dr \,\leq\, C
e^{-\varepsilon \theta |\log \delta|^\tau}\delta^{\alpha}|\log
\delta|^{\beta_0}
\end{aligned}
\end{equation*}
for some positive constant $C$ independent of $\delta$ and
$\varepsilon$. Thus $(HP3)$ holds.

On the other hand, for every $\delta>0$ small enough and every
$\varepsilon>0$
\begin{equation}\label{ehp1}
\begin{aligned}
\int_{\mathcal S^\delta\setminus \mathcal S^{\delta/2}} V^{-\beta+\varepsilon}(x)\, d\mu& \geq
 e^{-\varepsilon \theta 2^\tau |\log
\delta|^\tau}\int_{\mathcal S^\delta\setminus \mathcal S^{\delta/2}}
e^{\beta\theta |\log d(x)|^\tau}\,d\mu\\
&\geq C e^{-\varepsilon \theta 2^\tau|\log \delta|^\tau} \int_{\delta/2}^\delta
r^{\alpha-1}|\log r|^{\beta_0} \,dr \,\geq\, C e^{-\varepsilon \theta 2^\tau
|\log \delta|^\tau}\delta ^{\alpha}|\log \delta|^{\beta_0}\,,
\end{aligned}
\end{equation}
for some positive constant $C$ independent of $\delta$ and
$\varepsilon$. Passing to the limit in \eqref{ehp1} as
$\varepsilon\to 0$ we obtain that for every $\delta>0$ small enough
\begin{equation}\label{ehp1a}
\int_{\mathcal S^\delta\setminus \mathcal S^{\delta/2}} V^{-\beta}(x)\, d\mu \geq C
\delta^{\alpha}|\log \delta|^{\beta_0}\,.
\end{equation}
If, by contradiction, $(HP1)$ holds, passing to the limit as
$\varepsilon\to 0$ in \eqref{EQ_hp1} we obtain that, for every $\delta>0$
small enough,
\begin{equation}\label{ehp1b}
\int_{\mathcal S^\delta\setminus \mathcal S^{\delta/2}} V^{-\beta}(x)\, d\mu \leq \bar C
\delta^{\alpha}|\log \delta|^{\beta}\,.
\end{equation}
Since $\beta_0>\beta$, \eqref{ehp1a} and \eqref{ehp1b} are in
contrast. So $(HP1)$ cannot hold.
\end{exe}

\medskip

Now we show that if condition $(HP1)$ or $(HP2)$ or $(HP3)$ is not satisfied, then problem \eqref{EQ_gen} can admit a positive nontrivial solution. Before constructing our counterexample, which will only deal with the case $p=2$, we need some auxiliary results on spectral theory for a \textit{weighted eigenvalue problem} for the Laplace--Beltrami operator. Moreover, we recall the notion of Riemannian model manifolds (see Section \ref{Rm}). We should mention that a similar counterexample has been constructed in \cite{GrigS} and in \cite{MMP1}, when $\Omega=M$, with $M$ a complete noncompact Riemannian manifold. However, many differences occur in the present situation, due to the fact that $\Omega$ is bounded. Furthermore, the study of the first eigenvalue for the weighted eigenvalue problem was not necessary in \cite{GrigS}, \cite{MMP1}.

\subsection{Preliminary results for a weighted eigenvalue problem}

Let $a\equiv1$, $V\in C(\Omega)$, $V>0$ in $\Omega$; for any domain $D\subseteq \Omega$ set $L^2_V(D):=\{f: D \to \mathbb R\,\, \textrm{measurable such that}\,\, \int_D f^2\, V d\mu \, <\, \infty\}$.
For every $\delta>0$, consider the {\it weighted} eigenvalue problem
\begin{equation}\label{e21}
\begin{cases}
\Delta \phi + \lambda V \phi \,=\, 0  &  \,\,  \textrm{in} \    \, \Omega^\delta, \\
 \phi\,=\,0  & \ \textrm{on} \  \partial \Omega^\delta  \,, \\
\end{cases}
\end{equation}
where $\Omega^\delta:=\Omega\setminus\overline{\mathcal S^\delta}\,.$ It is known that, since $V\in C(\overline{\Omega^\delta})$ and  $V>0$, there exist the first eigenvalue $\lambda_\delta$ and the first eigenfunction $\phi_\delta\in L^2_V(\Omega^\delta)\cap W^{1,2}_0(\Omega^\delta)$ of \eqref{e21}, with
\begin{equation}\label{e10ceeeee}
\lambda_\delta=\inf_{\phi\in C^\infty_c(\Omega^\delta), \phi\not\equiv 0} \frac{\int_{\Omega^\delta}  |\nabla \phi|^2\, d\mu}{\int_{\Omega^\delta} V \phi^2\, d\mu}\,,
\end{equation}
 see e.g. \cite{GilTru}. Moreover,
$\lambda_\delta> 0$, $\lambda_{\delta_1}\geq \lambda_{\delta_2}$ if $\delta_1>\delta_2$, and
$\lambda_\delta\to \bar\lambda(\Omega)$ as $\delta\to 0^+,$ for some $\bar \lambda(\Omega)\in [0, \infty).$ Using \eqref{e10ceeeee}, an easy computation shows that
\begin{equation}\label{e10ce}
\bar \lambda(\Omega)=\inf_{\phi\in C^\infty_c(\Omega), \phi\not\equiv 0} \frac{\int_\Omega  |\nabla \phi|^2\, d\mu}{\int_\Omega V \phi^2\, d\mu}\,.
\end{equation}

From condition \eqref{e10ce}, the following lemma immediately follows.

\begin{lemma}\label{baspec}
Let $V\in C(\Omega), V>0$ in $\Omega$. Suppose that there exists $C>0$ such that for any $\alpha>0$ there exists $\phi_\alpha\in W^{1, 2}_0(\Omega)\cap L^2_{V}(\Omega)$, with $\phi_\alpha\not\equiv0$, for which
\begin{equation}\label{e1ce}
\int_\Omega |\nabla \phi_\alpha|^2 d\mu \leq C \alpha \int_\Omega V \phi_\alpha^2 \, d\mu
\end{equation}
and
\begin{equation}\label{e1ceee}
\int_{\mathcal{S}^{2\delta}\setminus\mathcal{S}^{\delta}} |\phi_\alpha|^2 d\mu =o(\delta^2)\qquad\text{ as }\delta\to0^+\,.
\end{equation}
Then $\bar \lambda(\Omega)=0\,.$
\end{lemma}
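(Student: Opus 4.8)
The plan is to use the variational characterization \eqref{e10ce} of $\bar\lambda(\Omega)$ directly: to show $\bar\lambda(\Omega)=0$ it suffices to produce, for every $\varepsilon>0$, a test function $\psi\in C^\infty_c(\Omega)\setminus\{0\}$ (or, by density, in $W^{1,2}_0(\Omega)\cap L^2_V(\Omega)$) with Rayleigh quotient $\int_\Omega|\nabla\psi|^2\,d\mu\big/\int_\Omega V\psi^2\,d\mu<\varepsilon$. The functions $\phi_\alpha$ supplied by hypothesis already have Rayleigh quotient at most $C\alpha$, which is small for small $\alpha$; however $\phi_\alpha$ need only lie in $W^{1,2}_0(\Omega)$, so it need not be compactly supported away from $\partial\Omega$, and a priori $\int_\Omega V\phi_\alpha^2\,d\mu$ could be finite only because of the weight. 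So the real work is a cutoff argument near $\partial\Omega$ showing that truncating $\phi_\alpha$ to $\Omega^\delta$ costs a negligible amount of Dirichlet energy once $\delta$ is chosen small, uniformly enough in $\alpha$.

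First I would fix $\alpha>0$ and abbreviate $\phi=\phi_\alpha$. For $\delta>0$ introduce the Lipschitz cutoff $\zeta_\delta$ equal to $0$ on $\mathcal S^\delta$, equal to $1$ on $\Omega\setminus\mathcal S^{2\delta}$, and interpolating linearly in $d(x)$ on the annulus $\mathcal S^{2\delta}\setminus\mathcal S^\delta$, so that $|\nabla\zeta_\delta|\le 1/\delta$ a.e. (here one uses that $d(\cdot)$ is $1$-Lipschitz, hence $|\nabla d|\le 1$ a.e.). Set $\psi_\delta:=\zeta_\delta\phi$; then $\psi_\delta\in W^{1,2}_0(\Omega)$ with support contained in $\overline{\Omega\setminus\mathcal S^\delta}$, and a standard mollification makes it admissible in \eqref{e10ce}. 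Using $\nabla\psi_\delta=\zeta_\delta\nabla\phi+\phi\nabla\zeta_\delta$ and the elementary inequality $(x+y)^2\le 2x^2+2y^2$,
\begin{equation*}
\int_\Omega|\nabla\psi_\delta|^2\,d\mu\le 2\int_\Omega|\nabla\phi|^2\,d\mu+\frac{2}{\delta^2}\int_{\mathcal S^{2\delta}\setminus\mathcal S^\delta}|\phi|^2\,d\mu.
\end{equation*}
By hypothesis \eqref{e1ceee} the second term is $o(1)$ as $\delta\to0^+$, hence there is $\delta(\alpha)>0$ with $\int_\Omega|\nabla\psi_{\delta(\alpha)}|^2\,d\mu\le 2\int_\Omega|\nabla\phi|^2\,d\mu+1\le 2C\alpha\int_\Omega V\phi^2\,d\mu+1$, where the last step uses \eqref{e1ce}.

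Next I would control the denominator from below: since $\zeta_\delta\uparrow 1$ pointwise on $\Omega$ as $\delta\to0^+$ and $V\phi^2\in L^1(\Omega,d\mu)$, dominated convergence gives $\int_\Omega V\psi_\delta^2\,d\mu=\int_\Omega V\zeta_\delta^2\phi^2\,d\mu\to\int_\Omega V\phi^2\,d\mu>0$ (the integral is positive because $\phi\not\equiv0$ and $V>0$). So, possibly shrinking $\delta(\alpha)$, we may also arrange $\int_\Omega V\psi_{\delta(\alpha)}^2\,d\mu\ge\tfrac12\int_\Omega V\phi^2\,d\mu=:\tfrac12 m_\alpha>0$. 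Combining the two bounds, the Rayleigh quotient of $\psi_{\delta(\alpha)}$ satisfies
\begin{equation*}
\frac{\int_\Omega|\nabla\psi_{\delta(\alpha)}|^2\,d\mu}{\int_\Omega V\psi_{\delta(\alpha)}^2\,d\mu}\le\frac{2C\alpha\, m_\alpha+1}{\tfrac12 m_\alpha}=4C\alpha+\frac{2}{m_\alpha}.
\end{equation*}
The term $4C\alpha\to0$ as $\alpha\to0^+$, but the term $2/m_\alpha$ need not — this is the one genuine obstacle, and it is handled by homogeneity: the Rayleigh quotient is invariant under scaling $\phi_\alpha\mapsto c\phi_\alpha$, so without loss of generality we may normalize $\int_\Omega V\phi_\alpha^2\,d\mu=1$ for every $\alpha$, i.e. $m_\alpha=1$. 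With this normalization the bound reads $4C\alpha+2$... which is \emph{not} small, so in fact the clean way is to avoid the additive $+1$ altogether: instead of stopping the cutoff estimate at an absolute constant, choose $\delta(\alpha)$ small enough that $\tfrac{2}{\delta(\alpha)^2}\int_{\mathcal S^{2\delta(\alpha)}\setminus\mathcal S^{\delta(\alpha)}}|\phi_\alpha|^2\,d\mu\le C\alpha$ (possible by \eqref{e1ceee}, given the normalization) and also $\int_\Omega V\psi_{\delta(\alpha)}^2\,d\mu\ge\tfrac12$. Then the numerator is $\le 2C\alpha+C\alpha=3C\alpha$ and the quotient is $\le 6C\alpha\to0$, whence $\bar\lambda(\Omega)\le\inf_{\alpha>0}6C\alpha=0$, and since $\bar\lambda(\Omega)\ge0$ always, $\bar\lambda(\Omega)=0$. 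The density of $C^\infty_c(\Omega^{\delta(\alpha)})$ in $W^{1,2}_0(\Omega^{\delta(\alpha)})$ lets one replace $\psi_{\delta(\alpha)}$ by a smooth compactly supported function with an arbitrarily small further perturbation of the quotient, so \eqref{e10ce} applies verbatim.
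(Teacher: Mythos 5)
Your final argument is correct and follows essentially the same route as the paper: a radial-in-$d(x)$ cutoff $\zeta_\delta$, the splitting $\int|\nabla(\zeta_\delta\phi_\alpha)|^2\le 2\int\zeta_\delta^2|\nabla\phi_\alpha|^2+2\int\phi_\alpha^2|\nabla\zeta_\delta|^2$, bounding the gradient term via \eqref{e1ce} and the cutoff-error term via \eqref{e1ceee} so that it is controlled by $\alpha$ times the denominator rather than by a fixed additive constant. The initial false start with the additive $+1$ and the observation that it fails is harmless since you then correct it to exactly the mechanism the paper uses.
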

\begin{proof}
  For any small $\delta>0$, consider a Lipschitz cut--off function $\psi$ such that $\psi\equiv0$ on $\mathcal{S}^\delta$, $\psi\equiv1$ on $\Omega\setminus\mathcal{S}^{2\delta}$ and $|\nabla\psi|\leq C\delta^{-1}$ for some $C>1$ independent of $\delta$. Then $\psi\phi_\alpha\in W^{1,2}_0(\Omega)$ with $\operatorname{supp}(\psi\phi_\alpha)\subset\Omega\setminus\mathcal{S}^\delta$ and
  \[
  \int_\Omega|\nabla(\psi\phi_\alpha)|^2\,d\mu\leq2\left(\int_\Omega\psi^2|\nabla\phi_\alpha|^2\,d\mu+\int_\Omega\phi_\alpha^2|\nabla\psi|^2\,d\mu\right)\,.
  \]
  Now note that, up to choosing $\delta>0$ small enough, since $\phi_\alpha\in L^2_{V}(\Omega)$ with $\int_\Omega\phi_\alpha^2V\,d\mu>0$,
  \[
  \int_\Omega\psi^2|\nabla\phi_\alpha|^2\,d\mu\leq\int_\Omega|\nabla\phi_\alpha|^2\,d\mu\leq C\alpha\int_\Omega\phi_\alpha^2V\,d\mu
  \leq2C\alpha\int_{\Omega^{2\delta}}\phi_\alpha^2V\,d\mu\leq 2C\alpha\int_{\Omega}(\psi\phi_\alpha)^2V\,d\mu\,.
  \]
  Moreover,
  \[
  \int_\Omega\phi_\alpha^2|\nabla\psi|^2\,d\mu\leq C\delta^{-2}\int_{\mathcal{S}^{2\delta}\setminus\mathcal{S}^{\delta}}\phi_\alpha^2\,d\mu=o(1)
  \]
  as $\delta\to0^+$. In particular, since $\phi_\alpha\in L^2_{V}(\Omega)$ with $\int_\Omega\phi_\alpha^2V\,d\mu>0$, we can choose $\delta>0$ small enough
  so that
  \[
  \int_\Omega\phi_\alpha^2|\nabla\psi|^2\,d\mu\leq \alpha\int_{\Omega\setminus\mathcal{S}^{2\delta}}\phi_\alpha^2V\,d\mu\leq \alpha\int_{\Omega}(\psi\phi_\alpha)^2V\,d\mu\,.
  \]
  We conclude that
  \[
  \int_\Omega|\nabla(\psi\phi_\alpha)|^2\,d\mu\leq C'\alpha\int_{\Omega}(\psi\phi_\alpha)^2V\,d\mu
  \]
  Since $\psi\phi_\alpha$ has compact support in $\Omega$, by standard mollification there exists $\varphi_\alpha\in C^\infty_c(\Omega)$ such that
  \begin{equation*}
      \frac{\int_{\Omega}  |\nabla\varphi_\alpha|^2\, d\mu}{\int_{\Omega} \varphi_\alpha^2V\, d\mu}\leq2\frac{\int_{\Omega}  |\nabla(\psi\phi_\alpha)|^2\, d\mu}{\int_{\Omega} (\psi\phi_\alpha)^2V\, d\mu}\leq2C'\alpha\,.
    \end{equation*}
  Hence $\bar\lambda(\Omega)\leq2C'\alpha$ for every $\alpha>0$, and the conclusion follows.
\end{proof}

Using the previous lemma, we show the next result.
\begin{proposition}\label{propce} Let $V\in C(\Omega), V>0$ in $\Omega$. Assume that
\begin{equation}\label{e8ce}
C_0 [d(x)]^{-\beta_0}  \leq V(x) \leq C_1 [d(x)]^{-\beta_1} \quad \textrm{for all}\;\; x\in \Omega\,,
\end{equation}
for some $C_1> C_0>0$, $\beta_1\geq \beta_0>2$. Then $\bar \lambda(\Omega)=0\,.$
\end{proposition}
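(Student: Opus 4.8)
The plan is to verify the hypotheses of Lemma~\ref{baspec}. Since $a\equiv1$ we have $d\mu=d\mu_0$, and $|\nabla d|=1$ a.e. in $\Omega$; so it suffices to produce, for every $\alpha>0$, a function $\phi_\alpha\in W^{1,2}_0(\Omega)\cap L^2_V(\Omega)$ with $\phi_\alpha\not\equiv0$, satisfying \eqref{e1ce} with a constant $C$ independent of $\alpha$, together with the decay condition \eqref{e1ceee}.

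I would first observe that \eqref{e1ceee} is automatic for \emph{any} $\psi\in L^2_V(\Omega)$, thanks to the lower bound in \eqref{e8ce} and the strict inequality $\beta_0>2$. Indeed $V\ge C_0 d^{-\beta_0}$ forces $\int_\Omega d^{-\beta_0}\psi^2\,d\mu<\infty$, so $\int_{\mathcal S^{2\delta}}d^{-\beta_0}\psi^2\,d\mu\to0$ as $\delta\to0^+$ because $\mathcal S^{2\delta}\downarrow\emptyset$; since $d(x)<2\delta$ on $\mathcal S^{2\delta}\setminus\mathcal S^{\delta}$ we get
\[
\int_{\mathcal S^{2\delta}\setminus\mathcal S^{\delta}}\psi^2\,d\mu\le(2\delta)^{\beta_0}\int_{\mathcal S^{2\delta}}d^{-\beta_0}\psi^2\,d\mu=o\bigl(\delta^{\beta_0}\bigr)=o\bigl(\delta^2\bigr).
\]
So everything reduces to arranging \eqref{e1ce}, i.e. to exhibiting functions concentrated near $\partial\Omega$, where $V$ is large, with arbitrarily small Rayleigh quotient.

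To this end I would fix an exponent $\gamma>\tfrac{\beta_1-1}{2}$ (hence also $\gamma>\tfrac12$, because $\beta_1\ge\beta_0>2$) and, for small $\epsilon>0$, set $\phi_\epsilon(x)=g_\epsilon(d(x))$, where $g_\epsilon(r)=r^{\gamma}$ on $[0,\epsilon]$, $g_\epsilon(r)=\epsilon^{\gamma}(2-r/\epsilon)$ on $[\epsilon,2\epsilon]$, and $g_\epsilon\equiv0$ on $[2\epsilon,\infty)$. Approximating by $(\phi_\epsilon-1/k)_+$, which has compact support in $\Omega$, shows $\phi_\epsilon\in W^{1,2}_0(\Omega)$; clearly $\phi_\epsilon\not\equiv0$ and $\operatorname{supp}\phi_\epsilon\subset\overline{\mathcal S^{2\epsilon}}$, and the upper bound $V\le C_1 d^{-\beta_1}$ with $2\gamma-\beta_1>-1$ yields $\phi_\epsilon\in L^2_V(\Omega)$. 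On $\mathcal S^{\epsilon}$ one has $|\nabla\phi_\epsilon|^2=\gamma^2 d^{2\gamma-2}=\gamma^2 d^{-2}\phi_\epsilon^2$ a.e., and there $d^{-2}\le\epsilon^{\beta_0-2}d^{-\beta_0}$ because $d<\epsilon$ and $\beta_0>2$; combined with $V\ge C_0 d^{-\beta_0}$ this gives
\[
\int_{\mathcal S^{\epsilon}}|\nabla\phi_\epsilon|^2\,d\mu=\gamma^2\int_{\mathcal S^{\epsilon}}d^{-2}\phi_\epsilon^2\,d\mu\le\frac{\gamma^2}{C_0}\,\epsilon^{\beta_0-2}\int_{\Omega}V\phi_\epsilon^2\,d\mu .
\]
On the transition shell $\mathcal S^{2\epsilon}\setminus\mathcal S^{\epsilon}$ one has $|\nabla\phi_\epsilon|=\epsilon^{\gamma-1}$, while on $\mathcal S^{3\epsilon/2}\setminus\mathcal S^{\epsilon}$ one has $\phi_\epsilon\ge\epsilon^{\gamma}/2$ and $V\ge C_0(2\epsilon)^{-\beta_0}$; comparing the $d\mu$-volumes of these two thin shells (both of order $\epsilon$ by the coarea formula) produces an estimate of the same type, so altogether $\int_\Omega|\nabla\phi_\epsilon|^2\,d\mu\le C\,\epsilon^{\beta_0-2}\int_\Omega V\phi_\epsilon^2\,d\mu$ with $C$ independent of $\epsilon$. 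Since $\beta_0-2>0$, given $\alpha>0$ I would choose $\epsilon=\epsilon(\alpha)$ with $C\epsilon^{\beta_0-2}\le\alpha$ and take $\phi_\alpha:=\phi_{\epsilon(\alpha)}$; this meets the requirements of Lemma~\ref{baspec}, and the lemma yields $\bar\lambda(\Omega)=0$.

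The only genuinely geometric ingredient — and the step I expect to require the most care — is the treatment of the transition layer $\mathcal S^{2\epsilon}\setminus\mathcal S^{\epsilon}$, whose estimates are not scale invariant: one needs that $d(\cdot)$ behaves like a regularized distance near $\partial\Omega$ and that the $d\mu$-volume of a thin shell $\{r<d<2r\}$ is comparable to $r$ for small $r>0$ (this is also what makes $\phi_\epsilon\in L^2_V(\Omega)$). This is automatic for sufficiently regular boundaries, in particular for the geodesic balls of Riemannian models to which the proposition will be applied; everything else is an elementary consequence of the two–sided bound \eqref{e8ce} and of the strict inequality $\beta_0>2$.
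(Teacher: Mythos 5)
Your route is genuinely different from the paper's, and most of it is sound, but as written it only proves the proposition under an extra geometric hypothesis that the statement does not make.

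The paper takes the explicit family $\phi_\alpha(x)=e^{-\sqrt\alpha\,[d(x)]^{-\gamma}}$ with $\gamma=\frac{\beta_0-2}{2}$. With this choice the identity $2\gamma+2=\beta_0$ turns the Rayleigh-quotient bound into a single \emph{pointwise} inequality,
\[
|\nabla\phi_\alpha|^2\leq\gamma^2\alpha\,[d(x)]^{-\beta_0}\phi_\alpha^2\leq\frac{\gamma^2\alpha}{C_0}\,V\phi_\alpha^2\qquad\text{a.e. in }\Omega,
\]
so \eqref{e1ce} follows by simple integration with $C=\gamma^2/C_0$, and no information whatsoever about the geometry of thin shells near $\partial\Omega$ is used; the superpolynomial decay of $\phi_\alpha$ at $\partial\Omega$ then makes membership in $W^{1,2}_0(\Omega)\cap L^2_V(\Omega)$ and condition \eqref{e1ceee} immediate. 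Your piecewise-power test functions $\phi_\epsilon=g_\epsilon(d)$, concentrated in $\mathcal S^{2\epsilon}$, are a nice alternative mechanism, and your remark that \eqref{e1ceee} is automatic for \emph{any} function in $L^2_V(\Omega)$ once $\beta_0>2$ is correct and a genuine simplification.

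The gap is in the transition shell and in the $W^{1,2}$ and $L^2_V$ membership of $\phi_\epsilon$: all three reductions invoke, via the coarea formula, that the $d\mu$-volume of $\{r<d<2r\}$ is of order $r$ uniformly as $r\to0^+$, and additionally that the volumes of $\mathcal S^{2\epsilon}\setminus\mathcal S^{\epsilon}$ and $\mathcal S^{3\epsilon/2}\setminus\mathcal S^{\epsilon}$ are comparable. This is a doubling-type regularity of $\partial\Omega$ which holds for smooth boundaries, in particular for the geodesic balls used later, but the proposition is stated for an arbitrary open, relatively compact $\Omega\subset M$ with no regularity of $\partial\Omega$ assumed, and the paper's pointwise argument needs none. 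To close the gap you should either add the missing hypothesis explicitly, or switch to the exponential test functions above, in which case your observation about \eqref{e1ceee} still streamlines the verification of the hypotheses of Lemma~\ref{baspec}.
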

\begin{proof}
For each $\alpha>0$ let
\[\phi_\alpha(x):=e^{-\sqrt \alpha [d(x)]^{-\gamma}},\quad x\in \Omega\,, \]
where $\gamma:=\frac{\beta_0-2}{2}.$  Note that $\phi_\alpha\in C(\Omega)$, $\phi_\alpha(x)\to 0$ as $d(x)\to 0^+$ and $\phi_\alpha$ satisfies \eqref{e1ceee}.
Furthermore, using the fact that $x\mapsto d(x)$ is Lipschitz in $\Omega$, we have
\[ \nabla \phi_\alpha(x) = \gamma \sqrt\alpha [d(x)]^{-\gamma -1} \nabla d(x) \phi_\alpha(x)\,,\quad x\in \Omega\,;  \]
therefore, since $|\nabla d(x)|\leq 1$ for a.e. $x\in \Omega$,
\[ |\nabla \phi_\alpha(x)|^2 \leq \gamma^2 \alpha [d(x)]^{-2\gamma -2} \phi_\alpha^2(x)\qquad\text{ a.e. } x\in \Omega\,. \]
Hence, in view of \eqref{e8ce},
\[\int_\Omega |\nabla \phi_\alpha(x)|^2\, d\mu \leq \gamma^2 \alpha \int_\Omega [d(x)]^{-2\gamma -2} \phi_\alpha^2(x) \, d\mu \leq
\frac{\gamma^2\alpha}{C_0}\int_\Omega V(x) \phi_\alpha^2(x) \, d\mu <\infty\,. \]
Consequently, $\phi_\alpha\in W^{1, 2}_0(\Omega)\cap L^2_V(\Omega),$ and condition \eqref{e1ce} is satisfied with $C=\frac{\gamma^2}{C_0}$. Hence, by Lemma \ref{baspec} the conclusion follows.
\end{proof}

\begin{rem}\label{rem2}
  Note that $\bar{\lambda}(\Omega)$ is the infimum of the spectrum of the Laplace operator on $L^2_V(\Omega)$, which is nonnegative and may not be achieved. Clearly, such a spectrum is deeply related to the eigenvalue equation \eqref{eigeneq}. Hence Lemma \ref{baspec} and Proposition \ref{propce} provide sufficient conditions which imply that such infimum is $0$.
\end{rem}

\subsection{Riemannian models}\label{Rm}
Let us fix a point $o\in M$ and denote by $\textrm{Cut}(o)$ the
{\it cut locus} of $o$. For any $x\in M\setminus
\big[\textrm{Cut}(o)\cup \{o\} \big]$, one can define the {\it
polar coordinates} with respect to $o$, see e.g. \cite{Grig}.
Namely, to any point $x\in M\setminus \big[\textrm{Cut}(o)\cup
\{o\} \big]$ there correspond a polar radius $r(x) := dist(x, o)$
and a polar angle $\theta\in \mathbb S^{m-1}$ such that the
shortest geodesics from $o$ to $x$ starts at $o$ with the
direction $\theta$ in the tangent space $T_oM$. Since we can
identify $T_o M$ with $\mathbb R^m$, $\theta$ can be regarded as a
point of $\mathbb S^{m-1}.$

The Riemannian metric in $M\setminus\big[\textrm{Cut}(o)\cup \{o\}
\big]$ in the polar coordinates reads as
\[ds^2 = dr^2+A_{ij}(r, \theta)d\theta^i d\theta^j, \]
where $(\theta^1, \ldots, \theta^{m-1})$ are coordinates in
$\mathbb S^{m-1}$ and $(A_{ij})$ is a positive definite matrix. It
is not difficult to see that the Laplace-Beltrami operator in
polar coordinates has the form
\begin{equation}\label{e70}
\Delta = \frac{\partial^2}{\partial r^2} + \mathcal F(r,
\theta)\frac{\partial}{\partial r}+\Delta_{S_{r}},
\end{equation}
where $\mathcal F(r, \theta):=\frac{\partial}{\partial
r}\big(\log\sqrt{A(r,\theta)}\big)$, $A(r,\theta):=\det
(A_{ij}(r,\theta))$, $\Delta_{S_r}$ is the Laplace-Beltrami
operator on the submanifold $S_{r}:=\partial B_r(o)\setminus
\textrm{Cut}(o)$ with  $B_r(o)\equiv B_r:=\{x\in M\,:\, \rho(x) <r \}$\,.

$M$ is a {\it manifold with a pole}, if it has a point $o\in M$
with $\textrm{Cut}(o)=\emptyset$. The point $o$ is called {\it
pole} and the polar coordinates $(r,\theta)$ are defined in
$M\setminus\{o\}$.

A manifold with a pole is a {\it spherically symmetric manifold} or
a {\it model}, if the Riemannian metric is given by
\begin{equation}\label{e70b}
ds^2 = dr^2+\psi^2(r)d\theta^2,
\end{equation}
where $d\theta^2$ is the standard metric in $\mathbb S^{m-1}$, and
\begin{equation}\label{26}
\psi\in \mathcal A:=\Big\{f\in C^\infty((0,\infty))\cap
C^1([0,\infty)): f'(0)=1,\, f(0)=0,\, f>0\text{ in }
(0,\infty)\Big\}.
\end{equation}
In this case, we write $M\equiv M_\psi$; furthermore, we have
$\sqrt{A(r,\theta)}=\psi^{m-1}(r)$, so the boundary area of the
geodesic sphere $\partial S_R$ is computed by
\[S(R)=\omega_m\psi^{m-1}(R),\]
$\omega_m$ being the area of the unit sphere in $\mathbb R^m$.
Also, the volume of the ball $B_R(o)$ is given by
\[\mu(B_R(o))=\int_0^R S(\xi)d\xi\,. \]
Moreover we have
\[\Delta = \frac{\partial^2}{\partial r^2}+ (m-1)\frac{\psi'}{\psi}\frac{\partial}{\partial r}+ \frac1{\psi^2}\Delta_{\mathbb S^{m-1}},\]
or equivalently
\[\Delta = \frac{\partial^2}{\partial r^2}+ \frac{S'}{S}\frac{\partial}{\partial r}+ \frac1{\psi^2}\Delta_{\mathbb S^{m-1}},\]
where $\Delta_{\mathbb S^{m-1}}$ is the Laplace-Beltrami operator in
$\mathbb S^{m-1}$.

Observe that for $\psi(r)=r$, $M=\mathbb R^m$, while for
$\psi(r)=\sinh r$, $M$ is the $m-$dimensional hyperbolic space
$\mathbb H^m$.

\subsection{Construction of a positive nontrivial solution}\label{ccex}
Let $M\equiv M_\psi$ be a model, $\Omega= B_1(o)\subset M_\psi$, $a\equiv 1$, $p=2$. Clearly, the choice $M=\mathbb R^m$ is possibile.
Let $0<\epsilon<\frac 1{\sigma -1}, C>0$ and define
\[V(x)\equiv V(r):= C(1-r)^{-(\sigma +1)}|\log(1-r)|^{-1-\epsilon(\sigma-1)},\]
with $r\equiv \rho(x).$ It is direct to see that, for some $C_1>0, C_2>0$,
\begin{equation}\label{e2ce}
C_1 \delta^{\frac{2 \sigma}{\sigma -1}}|\log \delta|^{\frac 1{\sigma -1}+\epsilon} \leq \int_{B_{1-\delta}\setminus B_{1-2\delta}} V^{-\frac 1{\sigma -1}} \, d\mu \leq C_2 \delta^{\frac{2\sigma}{\sigma -1}}|\log \delta|^{\frac 1{\sigma-1}+\epsilon}
\end{equation}
for any $\delta \in \left(0, \frac 1 2\right)$. Hence condition $(HP1)$ or $(HP2)$ or $(HP3)$ cannot be satisfied.  On the other hand, $V$ satisfies condition \eqref{e8ce}, therefore
\begin{equation}\label{e13ce}
\bar \lambda(\Omega)=0\,.
\end{equation}
We claim that there exists a positive solution of \eqref{EQ_gen}.
In order to prove the claim, we argue in three steps.

\smallskip

\noindent \textit{Step 1.}  Define
\begin{equation}\label{e6ce}
\zeta(x)\equiv \zeta(r):= (1-r) |\log(1-r)|^{\lambda} \quad \textrm{for all}\;\; x\in B_1\,.
\end{equation}
We have that
\[
\begin{aligned}
\zeta'(r)&=-|\log(1-r)|^{\lambda} +\lambda |\log(1-r)|^{\lambda-1}\,,\\
\zeta''(r)&=\frac{\lambda}{1-r}|\log(1-r)|^{\lambda -2}[(\lambda-1)-|\log(1-r)|]\,.
\end{aligned}
\]
So,
\begin{equation}\label{e4ce}
\begin{aligned}
\Delta \zeta + V(r) \zeta^\sigma = & \zeta''(r) +(m-1) \frac{\psi'}{\psi} \zeta'(r) + V(r) \zeta^\sigma(r) \\
=& \frac{|\log(1-r)|^{\lambda-2}}{1-r}\left\{\lambda(\lambda-1) -\lambda   |\log(1-r)| + \lambda(m-1)\frac{\psi'(r)}{\psi(r)} (1-r) |\log(1-r)|\right.  \\
&\,\,\,\,\left.-(m-1)\frac{\psi'(r)}{\psi(r)}(1-r)  |\log(1-r)|^2 + C  |\log(1-r)|^{(\sigma -1)(\lambda-\epsilon) +1}\right\}\,.
\end{aligned}
\end{equation}
In view of \eqref{e4ce}, if we take $0<\lambda<\epsilon$ and $\delta>0$ small enough, we get
\begin{equation}\label{e5ce}
\Delta \zeta + V \zeta^\sigma \leq 0 \quad \textrm{in}\;\; B_1\setminus B_{1-\delta}\,.
\end{equation}
Moreover, observe that
\begin{equation}\label{e60ce}
\zeta'<0 \quad \textrm{in}\;\; [r_0, 1)\,,
\end{equation}
for $r_0:=1-\delta$, if $\delta>0$ is small enough.

\smallskip

\noindent \textit{Step 2.}  For any $\rho\in (0,1)$ let $\lambda_\rho$ and $w_\rho$ be the first eigenvalue and, respectively, the first eigenfunction of problem
\begin{equation}\label{e7ce}
\begin{cases}
\Delta w_\rho + \lambda_\rho V w_\rho \,=\, 0  &  \,\,  \textrm{in} \    \, B_{\rho}, \\
 w_\rho\,=\,0  & \ \textrm{on} \  \partial B_\rho  \,,  \\
\end{cases}
\end{equation}
that is problem \eqref{e21} with $\Omega=B_1$, $\delta=1-\rho.$ It is known that $\lambda_\rho>0$ and that the corresponding eigenfunction $w_\rho$ is radial, i.e. $w_\rho=w_\rho(r),$ and does not change sign in $B_\rho$. We can suppose that $w_\rho(0)=1$; so, $w_\rho>0$ in $B_\rho$. Hence we have that
\begin{equation}\label{e11ce}
w_\rho'' + \frac{S'(r)}{S(r)} w_\rho' + \lambda_\rho V w_\rho \,=\, 0, \quad 0<r<\rho\,,
\end{equation}
with $w_\rho(\rho)=0$, $w_\rho(0)=1$, $w'_\rho(0)=0$, $w_\rho>0$ in $(0,\rho).$
From \eqref{e11ce} if follows that
\[ (S(r) w'_\rho) ' +\lambda_\rho S(r) V w_\rho =0\,.\]
Consequently, $(S w'_\rho)'\leq 0;$ so, the function $r\mapsto S(r) w'_\rho(r)$ is decreasing in $[0,\rho]$. Since it vanishes at $r=0$, we have that $S(r) w'_\rho(r)\leq 0$; therefore, $w_\rho'(r)\leq 0$ for all $r\in (0,\rho).$ Hence the function $r\mapsto w_\rho(r)$ is decreasing in $(0, \rho)$. Thus,
\begin{equation}\label{e14ce}
0< w_\rho\leq 1 \quad \textrm{in} \;\; (0, \rho)\,.
\end{equation}
Hence, $w_\rho$ is a positive solution of
\begin{equation}\label{e12ce}
\Delta w_\rho + \lambda_\rho V w_\rho^\sigma \leq 0\quad \textrm{in}\;\; B_\rho\,.
\end{equation}

We claim that there exists a sequence $\{\rho_k\}\subset (0,1)$ such that $\rho_k\to1$ and $w_{\rho_k}\to 1$ as $k\to \infty$ in $C_{\textrm{loc}}^1((0,1))$. In fact, set $\rho_n:=1-\frac 1 n$. Thanks to \eqref{e7ce} and \eqref{e14ce} with $\rho=\rho_n$, by standard elliptic regularity theory, there exists a subsequence $\{\rho_{n_k}\}\equiv\{\rho_k\}\subset \{\rho_n\}$ such that
$\{w_{\rho_k}\}$ converges in $C^\infty_{\textrm{loc}}(B_1)$ to a function $w$. Moreover, using \eqref{e13ce}, we can infer that $w$ solves
\[\Delta w = 0 \quad \textrm{in}\;\; B_1\,;\]
therefore,
\begin{equation}\label{e15ce}
\begin{cases}
w'' + \frac{S'(r)}{S(r)} w' = 0 &  \,\,  \textrm{in} \,  (0, 1), \\
w(0)=1\,. \\
\end{cases}
\end{equation}
Observe that all the solutions of the O.D.E.
\[w'' + \frac{S'(r)}{S(r)} w' = 0 \]
are given by
\[ w(r)= C_1\int_r^1 \frac{d\xi}{S(\xi)} + C_2 \quad (r \in (0, 1])
 \]
for $C_1, C_2\in \mathbb R$. However, $w(r)$ diverges as $r\to 0^+$, if $C_1\neq0$. Thus, the only bounded solution of \eqref{e15ce} is $w\equiv 1$, which corresponds to the choice $C_1=0$, $C_2=1$. Therefore, we can infer that
\begin{equation}\label{e16ce}
w_{\rho_k}\to 1\quad \textrm{in}\;\; C_{\textrm{loc}}^\infty(B_1)\;\; \textrm{as}\;\; k\to \infty\,.
\end{equation}

\smallskip

\noindent {\it Step 3\,.} Fix $\delta>0$ so that \eqref{e5ce} and \eqref{e60ce} hold, choose $\rho\in (0,1)$ such that $\rho>r_0$ and
\begin{equation}\label{e17ce}
\frac{w_\rho'(r_0)}{w_\rho(r_0)} > \frac{\zeta'(r_0)}{\zeta(r_0)}\,.
\end{equation}
This is possible, since, thanks to \eqref{e16ce},
\[ \frac{w_{\rho_k}'(r_0)}{w_{\rho_k}(r_0)} \to 0 \quad \textrm{as}\;\; k\to \infty, \]
whereas, by  \eqref{e60ce},
\[ \frac{\zeta'(r_0)}{\zeta(r_0)}<0\,.\]
Set
\[ \theta:=\inf_{[r_0, \rho)}\frac{\zeta}{w_\rho}\,.\]
Since $\lim_{r\to \rho^-} \frac{\zeta(r)}{w_\rho(r)} \to \infty\,,$ we deduce that
$\theta=\frac{\zeta(\xi)}{w_\rho(\xi)}$ for some $\xi\in [r_0, \rho).$ Actually, $\xi >r_0.$ In fact, thanks to \eqref{e17ce},
$$ \left(\frac{\zeta}{w_\rho}\right)'(r_0)=\frac{\zeta'(r_0)w_\rho(r_0) - \zeta(r_0) w'_\rho(r_0)}{w_\rho^2(r_0)} <0\,.
$$
So, $\frac{\zeta}{w_\rho}$ is strictly decreasing in a neighborhood of $r_0$ and cannot have a minimum point at $r_0.$ Hence, $\xi\in (r_0, \rho)$ and $\left(\frac{\zeta}{w_\rho}\right)'(\xi)=0.$
This implies that
\begin{equation}\label{e18ce}
\zeta(\xi)=\theta w_\rho(\xi)\,,\quad \zeta'(\xi)=\theta w_\rho'(\xi)\,.
\end{equation}
Define
\[ \tilde u(x)\equiv \tilde u(r):=
\begin{cases}
\theta w_\rho(r) &  \,\,  \textrm{for all} \  r\in (0, \xi), \\
\zeta(r) & \ \textrm{for all} \  r\in[\xi, 1). \\
\end{cases}
\]
In view of \eqref{e18ce} we have that $\tilde u\in C^1(B_1);$ hence, in particular, $\tilde u\in W^{1, 2}_{\textrm{loc}}(\Omega)$. By \eqref{e12ce},
\begin{equation}\label{e19ce}
\Delta \tilde u +\frac{\lambda_\rho}{\theta^{\sigma-1}}V \tilde u^\sigma \leq 0\quad \textrm{in}\;\; B_\xi\,.
\end{equation}
From \eqref{e5ce} and \eqref{e19ce} we obtain
\begin{equation}\label{e20ce}
\Delta \tilde u + \gamma V \tilde u^\sigma \leq 0 \quad \textrm{in}\;\; B_1\,,
\end{equation}
where $\gamma:=\min\left\{\frac{\lambda_\rho}{\theta^{\sigma-1}}, 1\right\}$. Let $u:= \gamma^{\frac 1{\sigma-1}}\tilde u$. Thanks to \eqref{e20ce} we get
\[ \Delta u + V u^\sigma \leq 0\quad \textrm{in}\;\; B_1\,.\]
So, we have exhibited a positive solution.


\begin{thebibliography}{999}
\bibitem{DaMit} L. D'Ambrosio, V. Mitidieri, {\it A priori estimates, positivity results, and nonexistence theorems for quasilinear degenerate elliptic inequalities}\,,
Adv. Math., {\bf 224} (2010), 967--1020\,.
\bibitem{DaLu} L. D'Ambrosio, S. Lucente, {\it Nonlinear Liouville theorems for Grushin and Tricomi operators}, J. Diff. Eq. {\bf 193} (2003), 511--541\,.
\bibitem{GilTru} D. Gilbarg, N. S. Trudinger, "Elliptic partial differential equations of second order" (2001), Springer-Verlag, Berlin\,.
\bibitem{Grig} A. Grigor'yan, {\it Analytic and geometric background of recurrence and non-explosion of the Brownian motion on Riemannian manifolds}, Bull. Am. Math. Soc. {\bf 36} (1999),  135--249\,.
\bibitem{GrigKond} A. Grigor'yan, V. A. Kondratiev, {\it On the existence of positive solutions of semilinear elliptic inequalities on Riemannian manifolds}, In Around the research of Vladimir Maz'ya. II, volume 12 of Int. Math. Ser. (N. Y.), pages
203--218. Springer, New York, 2010.
\bibitem{GrigS} A. Grigor'yan, Y. Sun, {\it On non-negative solutions of the inequality $\Delta u + u^\sigma \leq 0$ on Riemannian manifolds}\,, Comm. Pure Appl. Math.
{\bf 67} (2014), 1336--1352\,.

\bibitem{MMP1} P. Mastrolia, D. D. Monticelli, F. Punzo, {\it Nonexistence results for elliptic differential inequalities with a potential on Riemannian manifolds}, Calc. Var. Part. Diff. Eq. (2015) {\bf 54}, 1345--1372\,.
\bibitem{MMP2} P. Mastrolia, D.D. Monticelli, F. Punzo, {\it Nonexistence of solutions to parabolic differential inequalities with a potential on Riemannian manifolds}, Math. Ann. (to appear), doi 10.1007/s00208-016-1393-2\,.

\bibitem{MitPohoz359} V. Mitidieri, S. I. Pohozev, {\it Absence of global positive solutions of quasilinear elliptic inequalities}, Dokl. Akad. Nauk, {\bf 359} (1998), 456--460\,.
\bibitem{MitPohoz227} V. Mitidieri, S.I. Pohozaev, {\it Nonexistence of positive solutions for quasilinear elliptic problems in $\mathbb R^N$},  Tr. Mat. Inst. Steklova, {\bf 227} (1999), 192--222\,.
\bibitem{MitPohozAbsence} V. Mitidieri, S.I. Pohozaev, {\it A priori estimates and the absence of solutions of nonlinear partial differential equations and inequalities}, Tr. Mat. Inst. Steklova, {\bf 234} (2001)\,,1--384\,.
\bibitem{MitPohozMilan} V. Mitidieri, S.I. Pohozaev, {\it Towards a unified approach to nonexistence of solutions for a class of differential inequalities}, Milan J. Math., {\bf 72} (2004),  129--162\,.
\bibitem{Mont} D.D. Monticelli, {\it Maximum principles and the method of moving planes for a class of degenerate elliptic linear operators }, J. Eur. Math. Soc., {\bf 12} (2010),
611--654\,.
\bibitem{PoTe} S.I. Pohozaev, A. Tesei, {\it Nonexistence of local solutions to semilinear partial differential inequalities}. Ann. Inst.
H. Poinc. Anal. Non Lin., {\bf 21} (2004), 487--502\,.
\bibitem{P1} F. Punzo, {\it Blow-up of solutions to semilinear parabolic equations on Riemannian manifolds with negative sectional curvature }, J. Math. Anal. Appl., {\bf 387} (2012),
 815--827\,.
 \bibitem{PuTe} F. Punzo, A. Tesei, {\it On a semilinear parabolic equation with inverse-square potential}, Atti Accad. Naz. Lincei Cl. Sci. Fis. Mat. Natur. Rend. Lincei (9) Mat. Appl.
{\bf 21} (2010) 359--396\,.
\bibitem{Sun1} Y. Sun, {\it Uniqueness results for nonnegative solutions of semilinear inequalities on Riemannian manifolds}, J. Math. Anal. Appl. {\bf 419} (2014), 646--661\,.
\bibitem{Sun2} Y. Sun, {\it On nonexistence of positive solutions of quasilinear inequality on Riemannian manifolds}, preprint (2013), https://www.math.uni-bielefeld.de/sfb701/files/preprints/sfb13068.pdf\,.

\end{thebibliography}
\end{document}